\theoremstyle{plain}
\newtheorem{theorem}{Theorem}[section]
\newtheorem{lemma}[theorem]{Lemma}
\newtheorem{proposition}[theorem]{Proposition}
\theoremstyle{definition}
\newtheorem{definition}[theorem]{Definition}
\theoremstyle{remark}
\newtheorem{remark}[theorem]{Remark}
\numberwithin{equation}{section} 
\DeclareMathOperator{\divv}{div}
\DeclareMathOperator*{\esssup}{ess\:sup}
\DeclareMathOperator{\trace}{Tr}
\title{Semiflow selection to models of general\\ compressible viscous fluids}
\author{Danica Basari\'{c}}
\date{}
\begin{document}
	\maketitle
	
	\begin{center}
		Technische Universit\"{a}t Berlin \\
		Institute f\"{u}r Mathematik, Stra{\ss}e des 17 Juni 136, 10623 Berlin, Germany 
	\end{center}
	
	\begin{center}
		E-mail address: basaric@math.tu-berlin.de
	\end{center}

	\begin{abstract}
			We prove the existence of a semiflow selection with range the space of c\`{a}gl\`{a}d, i.e. left--continuous and having right--hand limits functions defined on $[0,\infty)$ and taking values in a Hilbert space. Afterwards, we apply this abstract result to the system arising from a compressible viscous fluid with a barotropic pressure of the type $a\varrho^{\gamma}$, $\gamma \geq 1$, with a viscous stress tensor being a nonlinear function of the symmetric velocity gradient.
	\end{abstract}

	\section{Introduction}
	
	First developed by Krylov \cite{Kry} and later adapted by Flandoli and Romito \cite{FlaRom}, Breit, Feireisl and Hofmanov\'{a} \cite{BreFeiHofm} in the context of the Navier--Stokes system, the semiflow selection is an important stochastic tool when studying systems that lack uniqueness: it allows to identify a solution satisfying at least the semigroup property. 
	
	Inspired by the deterministic adaptation of Cardona and Kapitanski \cite{CarKap}, in the first part of this work we will prove the existence of a semiflow selection in an abstract setting. More precisely, denoting with $H$ a Hilbert space and with $\mathcal{T}=\mathfrak{D}([0,\infty); H)$ the Skorokhod space of  c\`{a}gl\`{a}d functions defined on $[0,\infty)$ and taking values in $H$, we will show the existence of a Borel measurable map $\mathfrak{u}: D\subseteq H \rightarrow \mathcal{T}$ such that for any $x\in D$ and any $t_1, t_2 \geq 0$ 
	\begin{equation*}
		\mathfrak{u}(x)(t_1 + t_2) = \mathfrak{u} \left[ \mathfrak{u} (x)(t_1)\right] (t_2).
	\end{equation*}
	
	One could think that a more natural choice for $\mathcal{T}$ would be the space $C([0,\infty); H)$ of continuous functions, as in \cite{CarKap}. However, this option can be too strong: if we want to apply this abstract setting to the typical systems arising from fluid dynamics, where $[0,\infty)$ is the set of times and $\mathcal{T}$ represents the trajectory space, then it is difficult to ensure the energy of the system to be continuous, since it is at most a non--increasing quantity with possible jumps. For the aforementioned reason, in the context of the compressible Euler system, Breit, Feireisl and Hofmanov\'{a} \cite{BreFeiHofm1}, \cite{BreFeiHof} considered the energy in the $L^1$--space. But the choice $\mathcal{T}=L^1([0,\infty); H)$ is still not optimal as it is better to work with a space whose elements are well--defined at any point.
	
	In the second part of this work, we will apply the abstract machinery previously achieved to a general model of compressible viscous fluids, described by the following pair of equations:
	\begin{equation} \label{compressible viscous fluid}
		\begin{aligned}
			\partial_t \varrho + \divv_x (\varrho \textbf{u})&=0, \\
			\partial_t(\varrho \textbf{u}) + \divv_x(\varrho \textbf{u} \otimes \textbf{u})+ \nabla_x p &= \divv_x \mathbb{S},
		\end{aligned}
	\end{equation}
	where the unknown variables are the density $\varrho$ and the velocity $\textbf{u}$. In this context, the viscous stress tensor $\mathbb{S}$ is connected to the symmetric velocity gradient $\mathbb{D} \textbf{u}$ through the relation
	\begin{equation*}
		\mathbb{S} : \mathbb{D} \textbf{u}= F(\mathbb{D} \textbf{u}) +F^*(\mathbb{S}),
	\end{equation*}
	where $F$ is a proper convex l.s.c. (lower semi--continuous) function and $F^*$ denotes its conjugate; moreover, the barotropic pressure will be of the type $p(\varrho)= a\varrho^{\gamma}$, $\gamma \geq 1$. We will deal with the concept of \textit{dissipative solutions}, i.e. solutions that satisfy our problem in the weak sense but with an extra defect term in the balance of momentum, arising from possible concentrations and/or oscillations in the convective and pressure terms; for more details see Definition \ref{dissipative solution}. It is interesting to note that for $\gamma=1$, the defect in the momentum equation vanishes and the latter is satisfied in the sense of distributions. Thus our approach represents an alternative to the ``standard" measure--valued framework applied in this context by Matu\v{s}\accent23u-Ne\v{c}asov\'{a} and Novotn\'{y} \cite{MatNov}.
	
	Introducing the set--valued mapping $\mathcal{U}: D \subseteq H \rightarrow \mathcal{P}(\mathcal{T})$ that associated to every initial data $x\in D$ the family of dissipative solutions $\mathcal{U}(x)$ arising from $x \in D$, the key point in order to get the existence of semiflow selection will be to show that $\mathcal{U}$ satisfies five properties: \textit{non--emptiness}, \textit{compactness}, \textit{Borel--measurability}, \textit{shift invariance} and \textit{continuation}.
	
	As we will see, in order to verify the validity of the above mentioned properties, there are two main difficulties we have to overcome: the \textit{existence} of dissipative solutions and the \textit{weak sequential stability} of the family of dissipative solutions, arising from a fixed initial data. Luckily, the first problem was recently solved by Abbatiello, Feireisl and Novotn\'{y} in \cite{AbbFeiNov} for $\gamma>1$; following the same strategy, the case $\gamma=1$ can be done as well replacing Lemma 8.1 in \cite{AbbFeiNov} with Lemma \ref{auxiliary lemma} below. Conversely, larger part of this work will be dedicated to solve the second issue.
	
	The paper is organized as follows.
	\begin{itemize}
		\item In Section \ref{Skorokhod space} we will define the Skorokhod space of c\`{a}gl\`{a}d functions, constructing a proper metric on it and giving a characterization of convergence.
		\item In Section \ref{Semiflow selection} we will prove the existence of a semiflow  selection in the abstract setting, cf. Theorem \ref{existence semiflow selection}.
		\item In Section \ref{Compressible viscous fluids} we will focus on system \eqref{compressible viscous fluid}, first clarifying the concept of dissipative solution (Section \ref{Dissipative solution}), fixing a proper setting (Section \ref{Set--up}) and finally proving in Section \ref{Main result} our main result, i.e., the existence of a semiflow selection, cf. Theorem \ref{main result}.
		\item Section \ref{Weak Sequential Stability} will be entirely dedicated to prove the weak sequential stability of the solution set of \eqref{compressible viscous fluid} for a fixed initial data.
	\end{itemize}

	\section{Skorokhod space} \label{Skorokhod space}
	
	Given a separable Hilbert space $H$, we define the \textit{Skorokhod space} $\mathfrak{D}([0,\infty); H)$ as the space of the c\`{a}gl\`{a}d (an acronym for ``continue \`{a} gauche, limites \`{a} droite") functions defined on $[0,\infty)$ taking values in $H$. More precisely, $\Phi$ belongs to the space $\mathfrak{D}([0,\infty); H)$ if it is left--continuous and has right--hand limits:
	\begin{itemize}
		\item[(i)] for $t>0$, \ $\Phi(t-)=\lim_{s \uparrow t} \Phi(s)$ exists \ and \  $\Phi(t-)=\Phi(t)$;
		\item[(ii)] for $t\geq 0$, \  $\Phi(t+)=\lim_{s\downarrow t} \Phi(s)$ exists.
	\end{itemize}

	Unlike in the case of continuous functions, the topology on the space of c\`{a}gl\`{a}d functions on an unbounded interval cannot be built up by simply considering functions being c\`{a}gl\`{a}d on any compact, see e.g. Jakubowski \cite{Jak}. Instead, we proceed as follows.
	
	For every $\Phi\in  \mathfrak{D}([0,\infty); H)$ we define
	\begin{equation} \label{extension on [-1,0] and [T,T+1]}
		\widehat{\Phi}_T :=\begin{cases}
		\Phi(0) &\mbox{for } t\in [-1,0], \\
		\Phi(t) &\mbox{for } t\in (0,T), \\
		\Phi(T) &\mbox{for } t\in [T,T+1],
		\end{cases}
	\end{equation}
	and, fixing a basis $\{ e_k \}_{k\in \mathbb{N}}$ of the Hilbert space $H$, for every $\Phi, \Psi \in \widehat{\mathfrak{D}} ([0,\infty); H)$ we define
	\begin{equation} \label{metric non--compact, Hilbert}
		d_{\infty} (\Phi, \Psi) := \sum_{M=1}^{\infty} \sum_{k=1}^{\infty} \frac{1}{2^M}  \frac{1}{2^k} \frac{d_M\left( \left\langle \widehat{\Phi}_M; e_k\right\rangle, \left\langle \widehat{\Psi}_M; e_k \right\rangle \right)}{1+d_M\left( \left\langle \widehat{\Phi}_M; e_k\right\rangle, \left\langle \widehat{\Psi}_M; e_k \right\rangle \right)},
	\end{equation}
	where $d_M$ denotes the Skorokhod metric on the space $\mathfrak{D}([-1,M+1]; \mathbb{R})$, for all $M \in \mathbb{N}$; for the precise definition of $d_M$ and further details on the Skorokhod space we refer to Whitt \cite{Whi}, Chapter 12. It is worth noticing that, even if in \cite{Whi} the author considered c\`{a}gl\`{a}d (``continue \`{a} droite, limites \`{a} gauche") functions, the same construction works in our context as well: dealing with the completed graphs of the functions, which are obtained by adding segments joining the left and right limits at each discontinuity point to the graph, the actual value of the function at discontinuity points does not matter, provided that it falls appropriately between the left and right limits.
	
	It is easy to verify that $d_{\infty}$ is a metric on $\mathfrak{D}([0,\infty); H)$. Moreover, we have the following result.
	
		\begin{proposition} \label{characterization convergence non-compact hilbert}
		Let $\{ \Phi^n \}_{n\in \mathbb{N}}$ be a sequence in $\mathfrak{D}([0,\infty); H)$. If
		\begin{equation} \label{convergence non-compact} 
			\Phi^n \underset{n\rightarrow \infty}{\longrightarrow} \Phi \quad \mbox{in }\ \mathfrak{D}([0,\infty); H)
		\end{equation}
		then for all $k\in \mathbb{N}$
		\begin{equation} \label{almost everywhere convergence}
			\left\langle \Phi^n(t); e_k \right\rangle \underset{n\rightarrow \infty}{\longrightarrow} \left\langle \Phi(t); e_k \right\rangle \quad \mbox{for a.e. } t\in (0,\infty).
		\end{equation}
		Moreover
		\begin{itemize}
			\item[(i)] if $\Phi^n$ are monotone for all $n\in \mathbb{N}$, conditions \eqref{convergence non-compact} and \eqref{almost everywhere convergence} are equivalent;
			\item[(ii)] if $\Phi^n$ are continuous for all $n\in \mathbb{N}$, condition \eqref{convergence non-compact} is equivalent to
			\begin{equation*}
			\sup_{t\in [0,M]} \left| \left\langle \Phi^n(t)- \Phi(t) ; e_k \right \rangle  \right| \underset{n\rightarrow \infty}{\longrightarrow} 0
			\end{equation*}
			for all $k, M \in \mathbb{N}$.
		\end{itemize}
	\end{proposition}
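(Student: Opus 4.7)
The plan is to reduce everything to the classical one-dimensional theory on compact intervals. By construction of $d_\infty$, convergence $\Phi^n \to \Phi$ in $\mathfrak{D}([0,\infty); H)$ is equivalent to
\[
	d_M\bigl(\langle \widehat{\Phi}^n_M; e_k\rangle, \langle \widehat{\Phi}_M; e_k\rangle\bigr) \underset{n\to\infty}{\longrightarrow} 0
\]
for every $M, k \in \mathbb{N}$, since the weights $2^{-(M+k)}$ are summable and $x \mapsto x/(1+x)$ is a homeomorphism of $[0,\infty)$. This reduces all three claims to statements about convergence in the real-valued Skorokhod space $\mathfrak{D}([-1, M+1]; \mathbb{R})$, for which I would quote the corresponding results from Whitt, Chapter 12.

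For the main implication, I would invoke the classical fact that Skorokhod convergence on a compact interval implies pointwise convergence at every continuity point of the limit. A real-valued càglàd function on $[-1, M+1]$ has at most countably many discontinuities, so pointwise convergence $\langle \Phi^n(t); e_k\rangle \to \langle \Phi(t); e_k\rangle$ holds outside a Lebesgue-null set of $[-1, M+1]$; exhausting $(0,\infty)$ by the sequence of intervals $(0, M)$ yields \eqref{almost everywhere convergence}.

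For (i), assuming each $\langle \Phi^n(\cdot); e_k\rangle$ is monotone, I would use monotonicity together with \eqref{almost everywhere convergence} to extract, via a Helly-type argument, convergence at every continuity point of $\Phi$. The extension $\widehat{\Phi}^n_M$ is constant on $[-1, 0]$ and $[M, M+1]$, so the endpoint values are anchored; combining this with convergence at a dense set of continuity points of the monotone limit, one constructs the required time-changes realizing $d_M$-convergence by explicitly matching the jumps of $\widehat{\Phi}^n_M$ to those of $\widehat{\Phi}_M$ (equivalently, Hausdorff convergence of completed monotone graphs). For (ii), if every $\Phi^n$ is continuous and $\Phi^n \to \Phi$ in $d_\infty$, then uniform equicontinuity on each compact $[0, M]$ propagates to the limit, so $\Phi$ is continuous; Skorokhod convergence to a continuous limit is well known to be uniform convergence on compacts, giving the claimed equivalence. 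The reverse direction is immediate from the pointwise inequality $d_M(f, g) \leq \sup_{[-1, M+1]} |f - g|$ for real-valued functions.

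The main obstacle I anticipate is part (i), namely turning a.e.\ convergence of monotone càglàd functions into genuine $d_M$-convergence: one has to produce time-changes on $[-1, M+1]$ that simultaneously push the jumps of each approximant onto the jumps of the limit while converging uniformly to the identity. The saving grace is precisely monotonicity, which forces the jump structures of the approximating functions to stabilize in the right way, and the fact that the constant extension on $[-1, 0] \cup [M, M+1]$ prevents any mass from escaping through the endpoints, so the time-changes can be built piecewise between consecutive large jumps of $\Phi$.
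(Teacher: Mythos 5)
Your reduction to compact intervals is the same as the paper's: the summable weights $2^{-(M+k)}$ and the monotone homeomorphism $x\mapsto x/(1+x)$ make $d_\infty$-convergence equivalent to $d_M$-convergence of each coordinate $\langle\widehat{\Phi}^n_M;e_k\rangle$, and the main implication then follows from pointwise convergence at continuity points of the limit together with countability of the discontinuity set, exactly as in the paper (which cites Whitt, Theorem 12.5.1, Lemma 12.5.1 and Corollary 12.2.1 for these facts).

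For part (i) you take a genuinely different route: you propose to reconstruct the time-changes by hand via a Helly-type argument and jump matching, whereas the paper simply invokes Whitt's Corollary 12.5.1, which states that for monotone functions Skorokhod convergence on a compact interval is equivalent to pointwise convergence on a dense subset \emph{containing the endpoints}. Your sketch is in the right spirit, but you gloss over the one point where the endpoint hypothesis actually matters. The extension $\widehat{\Phi}^n_M$ is constant at the value $\Phi^n(0)$ on $[-1,0]$, and a.e.\ convergence on $(0,\infty)$ says nothing about convergence at $t=0$; so ``the endpoint values are anchored'' is not automatic. To control the left endpoint one must combine constancy of the extension with monotonicity and convergence at points arbitrarily close to $0$, which is precisely what Whitt's dense-subset criterion packages for you. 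Citing the corollary is cleaner; if you build the time-changes explicitly, this endpoint issue has to be addressed.

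For part (ii), the step ``uniform equicontinuity on each compact $[0,M]$ propagates to the limit, so $\Phi$ is continuous'' is not a valid deduction: Skorokhod convergence of a sequence of continuous functions does \emph{not} imply equicontinuity, so there is nothing to propagate. The paper's route is simply that, for continuous elements, $d_M$-convergence on $[-1,M+1]$ reduces to uniform convergence; in particular the jump functional vanishes in the limit, so $\Phi$ is continuous, and then Skorokhod convergence to a continuous limit is uniform. Your reverse direction via $d_M(f,g)\le\sup_{[-1,M+1]}|f-g|$ is fine; it is the forward direction that needs the correct justification.
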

	\begin{proof}
		First of all, we will show that \eqref{convergence non-compact} is equivalent to
		\begin{equation} \label{convergence compact}
			\left\langle \widehat{\Phi^n}_M; e_k \right\rangle \underset{n\rightarrow \infty}{\longrightarrow} \left\langle \widehat{\Phi}_M; e_k \right\rangle \quad \mbox{in }\ \mathfrak{D}([-1,M+1]; \mathbb{R})
		\end{equation}
		for all $k,M \in \mathbb{N}$. Indeed, if \eqref{convergence non-compact} holds, let $\varepsilon>0$, $k,M \in \mathbb{N}$ be fixed and choose a positive $\tilde{\varepsilon}=\tilde{\varepsilon}(\varepsilon)$ such that
		\begin{equation} \label{expression tilde epsilon}
			\frac{2^{k+M}\tilde{\varepsilon}}{1-2^{k+M}\tilde{\varepsilon}} < \varepsilon.
		\end{equation}
		From \eqref{convergence non-compact}, there exists $n_0=n_0(\tilde{\varepsilon})$ such that  
		\begin{equation*}
			\frac{1}{2^{k+M}} \frac{d_M\left( \left\langle \widehat{\Phi^n}_M; e_k\right\rangle, \left\langle \widehat{\Phi}_M; e_k \right\rangle \right)}{1+d_M\left( \left\langle \widehat{\Phi^n}_M; e_k\right\rangle, \left\langle \widehat{\Phi}_M; e_k \right\rangle \right)}\leq d_{\infty} (\Phi^n, \Phi) < \tilde{\varepsilon}, \quad \mbox{for all } n\geq n_0,
		\end{equation*}
		which, combined with \eqref{expression tilde epsilon}, implies
		\begin{equation*}
			d_M\left( \left\langle \widehat{\Phi^n}_M; e_k\right\rangle, \left\langle \widehat{\Phi}_M; e_k \right\rangle \right) < \varepsilon \quad \mbox{for all } n\geq n_0.
		\end{equation*}
		Vice versa, if \eqref{convergence compact} holds, let $\varepsilon>0$ be fixed and choose $N=N(\varepsilon)\geq 2$, such that $1/2^N <\varepsilon/2$. From \eqref{convergence compact}, there exists $n_0=n_0(\varepsilon)$ such that
		\begin{equation*}
			\max_{k+M \leq N} d_M\left( \left\langle \widehat{\Phi^n}_M; e_k\right\rangle, \left\langle \widehat{\Phi}_M; e_k \right\rangle \right) < \varepsilon \quad \mbox{for all } n\geq n_0.
		\end{equation*}
		For every $n\geq n_0$ we obtain
		\begin{equation*}
			d_{\infty} (\Phi^n, \Phi) \leq \sum_{2\leq k+M\leq N} \frac{\varepsilon}{2^{k+M}} + \sum_{k+M> N} \frac{1}{2^{k+M}} =  \frac{\varepsilon}{2} \left(1-\frac{1}{2^{N-1}}\right) + \frac{1}{2^N} <\varepsilon.
		\end{equation*}
		
		Let now \eqref{convergence compact} hold and let $k\in \mathbb{N}$ be fixed. From Theorem 12.5.1. in \cite{Whi}, for each $M \in \mathbb{N}$ 
		\begin{equation} \label{pointwise convergence in dense set}
			\begin{aligned}
				\left\langle \widehat{\Phi^n}_M(t); e_k \right\rangle &\underset{n\rightarrow \infty}{\longrightarrow} \left\langle \widehat{\Phi}_M(t); e_k \right\rangle \\
				\mbox{for all }t \mbox{ in a dense subset of }& [-1,M+1], \mbox{ including } -1 \mbox{ and } M+1,
			\end{aligned}
		\end{equation}
		implying in particular from \eqref{extension on [-1,0] and [T,T+1]}  the uniform convergence of $\widehat{\Phi^n}_M$ to $\widehat{\Phi}_M$ on $[-1,0]$ and $[M,M+1]$. We then recover that for all $M \in \mathbb{N}$
		\begin{equation} \label{convergence restricted functions}
			\left\langle \Phi^n|_{[0,M]}(t); e_k \right\rangle \underset{n\rightarrow \infty}{\longrightarrow} \left\langle \Phi|_{[0,M]}(t); e_k \right\rangle
		\end{equation}
		for all $t$ in a dense subset of $[0,M]$. More precisely, denoting with $Disc_M(\Phi)$ the set of discontinuities of $\Phi$ on $[0,M]$, specifically
		\begin{equation*}
			Disc_M(\Phi) := \{ t\in (0,M]: \ \Phi(t) \neq \Phi(t+) \}
		\end{equation*} 
		then, by Lemma 12.5.1 in \cite{Whi}, \eqref{convergence restricted functions} holds for all $t\notin Disc_M(\Phi)$. Introducing the set
		\begin{equation*}
			Disc(\Phi) = \bigcup_{M\in \mathbb{N}} Disc_M(\Phi),
		\end{equation*}
		then trivially $Disc_M(\Phi) \subseteq Disc_{M+1}(\Phi)$ for all $M\in \mathbb{N}$ and
		\begin{equation*}
			\left\langle \Phi^n(t); e_k \right\rangle \underset{n\rightarrow \infty}{\longrightarrow} \left\langle \Phi(t); e_k \right\rangle \quad \mbox{for all } t\notin Disc(\Phi);
		\end{equation*}
		since by Corollary 12.2.1 in \cite{Whi} each $Disc_M(\Phi)$ is either finite or countable, the set $Disc(\Phi)$ is at most countable and thus we get \eqref{almost everywhere convergence}.\\ 
		Furthermore, conditions (i) and (ii) follow easily from the fact that
		\begin{itemize}
			\item[(i)] if $\Phi^n$ is monotone for every $n\in \mathbb{N}$ then \eqref{convergence compact} is equivalent to \eqref{pointwise convergence in dense set} for all $k, M \in \mathbb{N}$ by \cite{Whi}, Corollary 12.5.1;
			\item[(ii)] if $\Phi^n$ is continuous for every $n\in \mathbb{N}$ then \eqref{convergence compact} reduces to uniform convergence on the interval $[-1, M+1]$ for all $k, M \in \mathbb{N}$.
		\end{itemize}
	\end{proof}
	
	\section{Semiflow selection} \label{Semiflow selection}
	In this section we will focus on proving the existence of a semiflow selection. We start by fixing our setting; from now on, $\mathcal{P}(X)$ will denote the family of all subsets of a space $X$. Let
	\begin{itemize}
		\item $H$ be a separable Hilbert space with a basis $\{e_k\}_{k\in \mathbb{N}}$;
		\item $D$ a closed convex subset of $H$;
		\item $\mathcal{T}= \mathfrak{D}([0,\infty); H)$;
		\item $\mathcal{U}: D \rightarrow \mathcal{P}(\mathcal{T})$ satisfy the following properties.
		\begin{itemize}
			\item[(\textbf{P1})] \textit{Non-emptiness}: for every $x \in D$, $\mathcal{U}(x)$ is a non-empty subset of $\mathcal{T}$.
			\item[(\textbf{P2})] \textit{Compactness}: for every $x\in D$, $\mathcal{U}(x)$ is a compact subset of $\mathcal{T}$.
			\item[(\textbf{P3})] \textit{Measurability}: the map $\mathcal{U}: D \rightarrow \mathcal{P}(\mathcal{T})$ is Borel measurable. 
			\item [(\textbf{P4})] \textit{Shift invariance}: introducing the positive shift operator $S_T \circ \Phi$ for every $T>0$ and $\Phi \in \mathcal{T}$ as
			\begin{equation*}
				S_T \circ \Phi(t) = \Phi(T+t), \quad \mbox{for all } t\geq 0,
			\end{equation*}
			then, for any $T>0$, $x\in D$ and $\Phi \in \mathcal{U}(x)$, we have
			\begin{equation*}
				S_T \circ \Phi \in \mathcal{U} (\Phi(T)).
			\end{equation*}
			\item[(\textbf{P5})] \textit{Continuation}: introducing the continuation operator $\Phi_1 \cup_T \Phi_2$ for every $T>0$ and $\Phi_1, \Phi_2 \in \mathcal{T}$ as
			\begin{equation*}
				\Phi_1 \cup_T \Phi_2(t) =\begin{cases}
					\Phi_1(t) &\mbox{for } 0\leq t\leq T, \\
					\Phi_2(t-T) &\mbox{for } t>T,
				\end{cases} \quad \mbox{for all }t\geq 0,
			\end{equation*}
			then, for any $T>0$, $x\in D$, $\Phi_1 \in \mathcal{U}(x)$ and $\Phi_2 \in \mathcal{U}(\Phi_1(T))$, we have
			\begin{equation*}
				\Phi_1 \cup_T \Phi_2 \in \mathcal{U}(x).
			\end{equation*}
		\end{itemize}
	\end{itemize}
	
	\begin{remark}
		It is worth noticing that, in order to guarantee the validity of the shift invariance and continuation properties, it is necessary to verify that for any $x \in D$ and any $\Phi \in \mathcal{U}(x)$, $\Phi$ evaluated at any $t\geq 0$ also belongs to set $D$.
	\end{remark}

	We are now ready to state and prove the following result.
	
	\begin{theorem} \label{existence semiflow selection}
		Let the mapping $\mathcal{U}: D\subseteq H \rightarrow \mathcal{P}(\mathcal{T})$ satisfy properties (\textbf{P1})--(\textbf{P5}) stated above. Then, there exists a Borel measurable map
		\begin{equation*}
			\mathfrak{u}: D \rightarrow \mathcal{T}, \quad \mathfrak{u}(x) \in \mathcal{U}(x) \mbox{ for every } x\in D,
		\end{equation*}
		satisfying the semigroup property: for any $x\in D$ and any $t_1, t_2 \geq 0$
		\begin{equation*}
			\mathfrak{u}(x)(t_1+t_2)= \mathfrak{u}[\mathfrak{u}(x)(t_1)](t_2).
		\end{equation*}
	\end{theorem}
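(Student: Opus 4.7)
The plan is to follow the Krylov--Flandoli--Romito selection scheme, iteratively refining the multi--valued map $\mathcal{U}$ by minimizing a countable family of functionals that are dense enough to separate trajectories, while preserving the five structural properties \textbf{(P1)}--\textbf{(P5)} at every step. Concretely, I would fix an enumeration $\{(\lambda_j, k_j)\}_{j\in\mathbb{N}}$ of $\mathbb{Q}_{>0}\times\mathbb{N}$ and associate to it the functionals
\begin{equation*}
    I_j(\Phi) := \int_0^{\infty} e^{-\lambda_j t} \langle \Phi(t); e_{k_j} \rangle \, dt, \qquad \Phi \in \mathcal{T}.
\end{equation*}
Each $I_j$ is well defined and continuous on $\mathcal{T}$: by Proposition \ref{characterization convergence non-compact hilbert}, Skorokhod convergence $\Phi^n\to\Phi$ yields pointwise convergence $\langle \Phi^n(t);e_{k_j}\rangle \to \langle \Phi(t);e_{k_j}\rangle$ for a.e. $t$, and on any compact time interval the trajectories stay bounded (by compactness of $\mathcal{U}(x)$), so dominated convergence applies.

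Setting $\mathcal{U}_0 := \mathcal{U}$, I define inductively
\begin{equation*}
    \mathcal{U}_j(x) := \Big\{ \Phi \in \mathcal{U}_{j-1}(x) : I_j(\Phi) = \min_{\Psi \in \mathcal{U}_{j-1}(x)} I_j(\Psi) \Big\}.
\end{equation*}
The minimum is attained because $\mathcal{U}_{j-1}(x)$ is compact (by induction) and $I_j$ is continuous. I then check that each $\mathcal{U}_j$ still satisfies \textbf{(P1)}--\textbf{(P5)}: non--emptiness and compactness are immediate; Borel measurability follows by writing the graph of $\mathcal{U}_j$ as a countable intersection of the form $\{(x,\Phi) : \Phi \in \mathcal{U}_{j-1}(x),\; I_j(\Phi) \le \inf_{\Psi\in\mathcal{U}_{j-1}(x)} I_j(\Psi) + 1/n\}$, using that $x \mapsto \inf_{\Psi \in \mathcal{U}_{j-1}(x)} I_j(\Psi)$ is Borel measurable (a standard consequence of measurability of $\mathcal{U}_{j-1}$ together with continuity of $I_j$). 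Shift invariance and continuation for $\mathcal{U}_j$ follow from the identity
\begin{equation*}
    I_j(\Phi_1 \cup_T \Phi_2) = \int_0^T e^{-\lambda_j t} \langle \Phi_1(t); e_{k_j}\rangle\, dt + e^{-\lambda_j T} I_j(S_T \circ (\Phi_1 \cup_T \Phi_2)),
\end{equation*}
which allows the minimization property to be propagated between $x$ and $\Phi(T)$.

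Having built the decreasing sequence, I set $\mathfrak{u}(x) := \bigcap_{j\in\mathbb{N}} \mathcal{U}_j(x)$. By the finite intersection property in the compact space $\mathcal{T}$ this is non--empty, and by injectivity of the transform $\Phi \mapsto (I_j(\Phi))_{j}$ on $\mathcal{T}$ (any two c\`agl\`ad trajectories with equal Laplace transforms $\int_0^\infty e^{-\lambda t}\langle \Phi;e_k\rangle$ for all rational $\lambda>0$ and all $k$ agree a.e., hence everywhere by left--continuity) it consists of a single element. Borel measurability of $x\mapsto \mathfrak{u}(x)$ is inherited from the Borel measurability of the $\mathcal{U}_j$. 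Finally, the semigroup identity is verified by combining \textbf{(P4)} and \textbf{(P5)}: given $x$ and $t_1$, both $S_{t_1} \circ \mathfrak{u}(x)$ and $\mathfrak{u}(\mathfrak{u}(x)(t_1))$ lie in $\mathcal{U}(\mathfrak{u}(x)(t_1))$, and the gluing $\mathfrak{u}(x)|_{[0,t_1]} \cup_{t_1} \mathfrak{u}(\mathfrak{u}(x)(t_1))$ lies in $\mathcal{U}(x)$, so the minimality of $\mathfrak{u}(x)$ and $\mathfrak{u}(\mathfrak{u}(x)(t_1))$ with respect to every $I_j$ forces them to coincide at time $t_2$ for every $t_2 \ge 0$.

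The main difficulty is the measurability bookkeeping at each step: one has to be careful that the iterative definition of $\mathcal{U}_j$, which involves minimizing a continuous functional over a measurable set--valued map with compact values, again produces a Borel measurable set--valued map with the same five properties. This essentially reduces to a measurable selection/projection argument in the Polish space $\mathcal{T}$ (whose Polish structure was set up in Section \ref{Skorokhod space}) and to verifying the compatibility of $I_j$ with the gluing operation $\cup_T$, both of which are non--trivial because the Skorokhod topology makes pointwise evaluation discontinuous and so the test functionals are forced to be of integral type.
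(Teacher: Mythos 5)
Your overall strategy coincides with the paper's: iteratively minimize a countable family of exponentially weighted integral functionals, check that each minimization preserves \textbf{(P1)}--\textbf{(P5)}, show the residual set is a singleton by Laplace--transform uniqueness, and read off the semigroup property from \textbf{(P4)}--\textbf{(P5)}. The propagation identity for $I_j$ across the gluing operation and the measurability bookkeeping via countable intersections are all done in the same spirit as in the paper (which itself defers these verifications to \cite{BreFeiHof} and \cite{Bas}).

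However, there is a genuine gap in your definition of the selection functionals. You set $I_j(\Phi)=\int_0^\infty e^{-\lambda_j t}\langle \Phi(t);e_{k_j}\rangle\,dt$, with no truncation of the integrand. In the abstract setting of the theorem, $\mathcal{T}=\mathfrak{D}([0,\infty);H)$ imposes no growth condition whatsoever on a trajectory, so $\langle\Phi(t);e_{k_j}\rangle$ can grow arbitrarily fast and the integral need not converge; even for trajectories in a fixed compact set $\mathcal{U}(x)\subset\mathcal{T}$, compactness in the metric $d_\infty$ only controls behaviour on each $[0,M]$ separately and does not yield a dominating function on all of $(0,\infty)$, so the dominated--convergence argument for continuity does not go through. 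The paper circumvents exactly this by composing with a fixed bounded, smooth, strictly increasing function $f$, i.e. $I_{\lambda,k}(\Phi)=\int_0^\infty e^{-\lambda t} f(\langle\Phi(t);e_k\rangle)\,dt$: boundedness of $f$ makes the integrand dominated by $e^{-\lambda t}\|f\|_\infty$ (giving well--definedness and continuity on all of $\mathcal{T}$), and strict monotonicity of $f$ is what lets one pass from Lerch's theorem for the transformed quantity back to a.e.\ equality of $\langle\Phi_1;e_k\rangle$ and $\langle\Phi_2;e_k\rangle$. You need to reinstate this truncation; once you do, the rest of your argument matches the paper's.
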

	\begin{proof}
		The idea of the proof is to reduce iteratively the set $\mathcal{U}(x)$ for a fixed $x\in D$, selecting the minimum points of particular functionals in order to obtain finally a single point in $\mathcal{T}$, which will define $\mathfrak{u}(x)$. The procedure has been proposed by Cardona and Kapitanski \cite{CarKap} in the context of continuous trajectories and later adapted to more general setting in \cite{BreFeiHof} and \cite{Bas}.
		
		We introduce the functionals $I_{\lambda, k}: \mathcal{T}\rightarrow \mathbb{R}$ defined for every $\Phi \in \mathcal{T}$ as
		\begin{equation} \label{selection functional}
			I_{\lambda, k}(\Phi)=\int_{0}^{\infty} e^{-\lambda t} \  f\left(\left\langle \Phi(t); e_k \right\rangle\right) {\rm d}t,
		\end{equation}
		where $\lambda>0$, $\{e_k\}_{k\in \mathbb{N}}$ is a basis in $H$ and $f: \mathbb{R} \rightarrow \mathbb{R}$ is a fixed smooth, bounded and strictly increasing function; this choice is justified by the fact that for a fixed $k\in \mathbb{N}$ we can see $I_{\lambda,k}$ as the Laplace transform of the function $f\left(\left\langle \Phi(\cdot); e_k \right\rangle\right)$, an useful interpretation for the proof of the existence of the semiflow $\mathfrak{u}$.
		
		Let us first show the continuity of $I_{\lambda,k}$ for every $\lambda>0$ and $k\in \mathbb{N}$ fixed. Let $\{\Phi_n\}_{n\in \mathbb{N}}$ in $\mathcal{T}$ be such that
		\begin{equation} \label{convergence in Skorohod}
			\Phi_n \rightarrow \Phi \quad \mbox{in } \mathcal{T}
		\end{equation}
		as $n\rightarrow \infty$. By Proposition \ref{characterization convergence non-compact hilbert}, \eqref{convergence in Skorohod} implies in particular that
		\begin{equation*}
			\left \langle \Phi_n(t); e_k \right \rangle \rightarrow \left \langle \Phi(t); e_k \right \rangle \quad \mbox{for a.e. } t\in (0,\infty)
		\end{equation*}
		as $n\rightarrow \infty$. Due to continuity and boundedness of $f$, we get
		\begin{equation*}
			I_{\lambda,k}(\Phi_n) = \int_{0}^{\infty} e^{-\lambda t} \ f\left(\left\langle \Phi_n(t); e_k \right\rangle\right) {\rm d}t \rightarrow \int_{0}^{\infty} e^{-\lambda t} \ f\left(\left\langle \Phi(t); e_k \right\rangle\right) {\rm d}t = I_{\lambda, k}(\Phi),
		\end{equation*}
		i.e.,  what we wanted to prove.
		
		We define the selection mapping for every $x\in D$ as
		\begin{equation*}
			I_{\lambda, k} \circ \mathcal{U}(x) = \{ \Phi \in \mathcal{U}(x)  : \  I_{\lambda,k}(\Phi) \leq I_{\lambda, k}(\widetilde{\Phi}) \mbox{ for all } \widetilde{\Phi} \in \mathcal{U}(x)\}.
		\end{equation*}
		Notice, in particular, that the minimum exists since $I_{\lambda,k}$ is continuous on $\mathcal{T}$ and the set $\mathcal{U}(x)$ is compact. From the fact that $\mathcal{U}$ satisfies properties (\textbf{P1})--(\textbf{P5}), it is not difficult to show that the set--valued mapping
		\begin{equation*}
			I_{\lambda, k}\circ \mathcal{U}: D \rightarrow \mathcal{P}(\mathcal{T})
		\end{equation*}
		satisfies properties (\textbf{P1})--(\textbf{P5}) as well, cf. Proposition 5.1 in \cite{BreFeiHof}. 
		
		We are now ready to prove the existence of the semiflow selection $\mathfrak{u}$. Fixing a countable set $\{ \lambda_j \}_{j\in \mathbb{N}}$ dense in $(0,\infty)$, we can define the functionals 
		\begin{equation*}
			I_{j,k}(\Phi)=\int_{0}^{\infty} e^{-\lambda_j t} f\left(\left\langle \Phi(t); e_k \right\rangle\right) {\rm d}t.
		\end{equation*}
		Choosing an enumeration $\{ j(i), k(i) \}_{i=1}^{\infty}$ of the all involved combinations of indeces, we define the maps
		\begin{equation*}
			\mathcal{U}^i:= I_{j(i), k(i)} \circ \dots \circ I_{j(1), k(1)} \circ \mathcal{U}, \quad i=1,2, \dots,
		\end{equation*}
		and
		\begin{equation*}
			\mathcal{U}^{\infty} :=\bigcap_{i=1}^{\infty} \mathcal{U}^i.
		\end{equation*}
		It is easy to show that the set-valued map
		\begin{equation*}
			D \ni x \mapsto \mathcal{U}^{\infty}(x) \in \mathcal{P}(\mathcal{T})
		\end{equation*}
		satisfies properties (\textbf{P1})--(\textbf{P5}) as well; for details, see \cite{BreFeiHof}, Theorem 2.5 and \cite{Bas}, Theorem 2.2. 
		
		We now claim that for every $x\in D$ the set $\mathcal{U}^{\infty}(x)$ is a singleton. Indeed, if $\Phi_1, \Phi_2 \in \mathcal{U}^{\infty}(x)$ for a fixed $x\in D$, then
		\begin{equation*}
			I_{j(i), k(i)}(\Phi_1)=I_{j(i), k(i)}(\Phi_2)
		\end{equation*}
		for all $i=1,2,\dots$. Since the integrals $I_{j(i), k(i)}$ can be seen as Laplace transforms of the functions $f\left(\left\langle \Phi(\cdot); e_k \right\rangle\right)$, we can apply Lerch's theorem to deduce that
		\begin{equation*}
			f\left(\left\langle \Phi_1(t); e_k \right\rangle\right)=f\left(\left\langle \Phi_2(t); e_k \right\rangle\right)
		\end{equation*}
		for all $k\in \mathbb{N}$ and a.e. $t\in (0,\infty)$. Since the function $f$ is strictly increasing, we obtain that
		\begin{equation*}
			\left\langle \Phi_1(t); e_k \right\rangle = \left\langle \Phi_2(t); e_k \right\rangle
		\end{equation*}
		for all $k\in \mathbb{N}$ and a.e. $t\in (0,\infty)$; in particular, from \eqref{metric non--compact, Hilbert} we get that $d_{\infty}(\omega_1, \omega_2)=0$ and thus $\omega_1=\omega_2$ in $\mathcal{T}$.
		
		Finally, we define the semiflow selection $\mathfrak{u}$ for all $x\in D$ as
		\begin{equation*}
			\mathfrak{u}(x):= \mathcal{U}^{\infty}(x) \in \mathcal{T};
		\end{equation*}
		mesurability follows from the property $(\textbf{P3})$ for $\mathcal{U}^{\infty}$, while the semigroup property follows from property (\textbf{P4}): for any $x\in D$ and any $t_1, t_2 \geq 0$
		\begin{equation*}
		\mathfrak{u}(x)(t_1+t_2) = S_{t_1} \circ \mathfrak{u}(x)(t_2) =\mathfrak{u}(x(t_1))(t_2).
		\end{equation*}
	\end{proof}

	\section{Semiflow selection for compressible viscous fluids} \label{Compressible viscous fluids}
	
	Let us consider a general mathematical model of compressible viscous fluids, represented by the following system
	\begin{equation} \label{continuity equation}
		\partial_t \varrho + \divv_x (\varrho \textbf{u})=0,
	\end{equation}
	\begin{equation} \label{balance of momentum}
		\partial_t(\varrho \textbf{u}) + \divv_x(\varrho \textbf{u} \otimes \textbf{u})+ \nabla_x p = \divv_x \mathbb{S};
	\end{equation}
	here $\varrho=\varrho(t,x)$ denotes the density, $\textbf{u}=\textbf{u}(t,x)$ the velocity, $p=p(\varrho)$ the barotropic pressure and $\mathbb{S}$ the viscous stress tensor, which we suppose to be connected to the symmetric velocity gradient
	\begin{equation*}
		\mathbb{D}\textbf{u}= \frac{1}{2}(\nabla_x \textbf{u} + \nabla_x^T \textbf{u})
	\end{equation*}
	through the Fenchel's identity
	\begin{equation} \label{relation viscous stress symmetric gradient}
		\mathbb{S}: \mathbb{D}\textbf{u}=F(\mathbb{D}\textbf{u}) + F^*(\mathbb{S}),
	\end{equation}
	where, denoting with $\mathbb{R}^{d\times d}_{sym}$ the space of $d$-dimensional real symmetric tensors,
	\begin{equation} \label{conditions on F}
		F: \mathbb{R}^{d\times d}_{sym} \rightarrow 	[0,\infty) \mbox{ is a convex l.s.c. function with } F(0)=0,
	\end{equation}
	and $F^*$ is its conjugate, defined for every $\mathbb{A} \in \mathbb{R}^{d\times d}_{sym}$ as
	\begin{equation*}
		F^*(\mathbb{A}):= \sup_{\mathbb{B}\in 	\mathbb{R}^{d\times d}_{sym}} \{ \mathbb{A}: \mathbb{B}-F(\mathbb{B}) \}.
	\end{equation*}
	Notice that conditions \eqref{conditions on F} guarantees that
	\begin{equation} \label{conditions on F*}
		F^*: \mathbb{R}^{d\times d}_{\rm sym} \rightarrow [0,\infty] \mbox{ is a superlinear l.s.c. function}, \quad \liminf_{|\mathbb{S}|\rightarrow \infty} \frac{F^*(\mathbb{S})}{|\mathbb{S}|}=\infty.
	\end{equation}
	Furthermore, we will suppose $F$ to satisfy relation
	\begin{equation} \label{relation F and trsce-less part}
		F(\mathbb{D}) \geq \mu \left| \mathbb{D}-\frac{1}{d} \trace[\mathbb{D}]\mathbb{I} \right|^q \quad \mbox{for all } |\mathbb{D}|> 1,
	\end{equation}
	for some $\mu >0$ and $q>1$.
	Notice that condition \eqref{relation viscous stress symmetric gradient} is equivalent in requiring
	\begin{equation*}
		\mathbb{S} \in \partial F(\mathbb{D}\textbf{u}),
	\end{equation*}
	where $\partial$ denotes the subdifferential of a convex function. 
	
	Regarding pressure, we will consider the standard isentropic case
	\begin{equation} \label{pressure}
		p(\varrho)= a \varrho^{\gamma}, \quad \gamma\geq 1,
	\end{equation}
	with $a$ a positive constant; however, more general EOS preserving the essential features of \eqref{pressure} can be considered, cf. \cite{AbbFeiNov}. The pressure potential $P$, satisfying the ODE
	\begin{equation*}
		\varrho P'(\varrho)-P(\varrho)=p(\varrho),
	\end{equation*} 
	will be of the form
	\begin{equation} \label{pressure potential}
		P(\varrho) = \begin{cases}
			a \ \varrho \log \varrho &\mbox{if } \gamma=1, \\
			\frac{a}{\gamma-1} \varrho^{\gamma} &\mbox{if } \gamma >1;
		\end{cases}
	\end{equation}
	in particular, this implies that 
	\begin{equation} \label{conditions pressure potential}
		P \mbox{ is a strictly convex superlinear 	continuous function on } [0,\infty).
	\end{equation}
	
	We will study the system on the set $(0,\infty)\times \Omega$, where the physical domain $\Omega \subset \mathbb{R}^d$ is assumed to be bounded and Lipschitz, on the boundary of which we impose the no--slip condition
	\begin{equation} \label{boundary condition}
		\textbf{u}|_{\partial \Omega}=0.
	\end{equation} 
	Finally, we fix the initial conditions
	\begin{equation} \label{initial conditions}
		\varrho(0,\cdot)=\varrho_0, \quad (\varrho \textbf{u})(0,\cdot)= \textbf{m}_0.
	\end{equation}
	
	Our goal is to apply the abstract machinery introduced in the previous section in order to show the existence of a semiflow selection for system \eqref{continuity equation}--\eqref{initial conditions}. More precisely, we aim to prove Theorem \ref{main result} below, clarifying first the concept of solution we will work with and fixing a proper setting.
	
	\subsection{Dissipative solution} \label{Dissipative solution}
	As already mentioned in the introduction, inspired by the recent work of Abbatiello, Feireisl and Novotn\'{y} \cite{AbbFeiNov}, we will refer to the concept of dissipative solutions. From now  on, it is better to consider the density $\varrho$ and the momentum $\textbf{m}=\varrho \textbf{u}$ as state variables, since they are at least weakly continuous in time.
	
	\begin{definition} \label{dissipative solution}
		The pair of functions $[\varrho,\textbf{m}]$ constitutes a \textit{dissipative solution} to the problem \eqref{continuity equation}--\eqref{initial conditions} with the total energy $E$ and initial data
		\begin{equation*}
			[\varrho_0, \textbf{m}_0, E_0] \in L^{\gamma}(\Omega)\times L^{\frac{2\gamma}{\gamma+1}}(\Omega; \mathbb{R}^d) \times [0,\infty)
		\end{equation*}
		if the following holds:
		\begin{itemize}
			\item[(i)]  $\varrho \geq 0$ in $(0,\infty) \times \Omega$ and
			\begin{equation*}
				[\varrho, \textbf{m}, E] \in C_{\rm weak,loc}([0,\infty); L^{\gamma}(\Omega)) \times  C_{\rm weak,loc}([0,\infty); L^{\frac{2\gamma}{\gamma+1}}(\Omega;\mathbb{R}^d)) \times \mathfrak{D}([0,\infty));
			\end{equation*}
			\item[(ii)] the integral identity
			\begin{equation} \label{weak formulation continuity equation}
				\left[ \int_{\Omega} \varrho \varphi(t,\cdot) \ {\rm d}x \right]_{t=0}^{t=\tau}= \int_{0}^{\tau} \int_{\Omega} [\varrho \partial_t \varphi + \textbf{m}\cdot \nabla_x \varphi] \ {\rm d}x {\rm d}t
			\end{equation}
			holds for any $\tau >0$ and any $\varphi \in C_c^1([0,\infty)\times \overline{\Omega})$, with $\varrho(0,\cdot)=\varrho_0$;
			\item[(iii)] there exist
			\begin{equation*}
				\mathbb{S} \in L^1_{\rm loc} (0,\infty; L^1(\Omega; \mathbb{R}^{d\times d}_{\rm sym})) \quad \mbox{and} \quad \mathfrak{R} \in L^{\infty}_{\rm weak}(0,\infty; \mathcal{M}^+(\overline{\Omega}; \mathbb{R}^{d\times d }_{\rm sym}))
			\end{equation*}
			such that the integral identity
			\begin{equation} \label{weak formulation balance of momentum}
				\begin{aligned}
					\left[ \int_{\Omega} \textbf{m}\cdot \bm{\varphi}(t, \cdot) \ {\rm d}x \right]_{t=0}^{t=\tau} &= \int_{0}^{\tau}\int_{\Omega} \left[ \textbf{m} \cdot \partial_t \bm{\varphi} + \mathbbm{1}_{\varrho>0} \frac{\textbf{m}\otimes \textbf{m}}{\varrho}:\nabla_x \bm{\varphi} +p(\varrho)\divv_x\bm{\varphi}\right] \ {\rm d}x {\rm d}t \\
					&- \int_{0}^{\tau} \int_{\Omega} \mathbb{S}: \nabla_x \bm{\varphi} \ {\rm d}x {\rm d}t + \int_{0}^{\tau} \int_{\overline{\Omega}} \nabla_x \bm{\varphi} : {\rm d}\mathfrak{R} \ {\rm d}t
			\end{aligned}
			\end{equation}
			holds for any $\tau>0$ and any $\bm{\varphi} \in C^1_c([0,\infty)\times \overline{\Omega}; \mathbb{R}^d)$, $\bm{\varphi}|_{\partial \Omega}=0$, with $\textbf{m}(0,\cdot)=\textbf{m}_0$;
			\item[(iv)] there exists 
			\begin{equation*}
				\textbf{u} \in L^q_{\rm loc}(0,\infty; W_0^{1,q}(\Omega; \mathbb{R}^d)) \mbox{ such that } \textbf{m}=\varrho \textbf{u} \mbox{ a.e. in } (0,\infty)\times \Omega; 
			\end{equation*}
			\item[(v)] there exist a constant $\lambda>0$
			and a c\`{a}gl\`{a}d function $E$, non--increasing in $[0,\infty)$, satisfying
			\begin{equation} \label{energy}
				\int_{\Omega} \left[ \frac{1}{2} \frac{|\textbf{m}|^2}{\varrho} + P(\varrho) \right](\tau,\cdot) \ {\rm d}x + \frac{1}{\lambda}\int_{\overline{\Omega}} \rm d  \trace[\mathfrak{R}(\tau)] = E(\tau)
			\end{equation}
			for a.e. $\tau>0$, such that the energy inequality
			\begin{equation} \label{energy inequality}
				\left[ E(t)\psi(t) \right]_{t=\tau_1^-}^{t=\tau_2^+} -\int_{\tau_1}^{\tau_2} E \ \psi' \ {\rm d}t+ \int_{\tau_1}^{\tau_2} \psi \int_{\Omega} \left[ F(\mathbb{D}\textbf{u})+ F^*(\mathbb{S})\right] \ {\rm d}x {\rm d}t \leq 0
			\end{equation}
			holds for any $0\leq \tau_1 \leq \tau_2$ and any $\psi \in C_c^1[0,\infty)$, $\psi \geq 0$, with $E(0-)= E_0\geq E(0+)$.
		\end{itemize}
	\end{definition}
	
	\begin{remark} \label{remark on space of measures}
		In this context, $\mathcal{M}^+(\overline{\Omega})$ represents the space of all the positive Borel measures on $\overline{\Omega}$, while $\mathcal{M}^+(\overline{\Omega}; \mathbb{R}^{d\times d}_{\rm sym})$ denotes the space of tensor--valued (signed) Borel measures $\mathfrak{R}$ such that 
		\begin{equation*}
		\mathfrak{R}: (\xi \otimes \xi) \in \mathcal{M}^+(\overline{\Omega}),
		\end{equation*}
		for all $\xi \in \mathbb{R}^d$, and with components $\mathfrak{R}_{i,j}=\mathfrak{R}_{j,i}$. $L^{\infty}_{\rm weak}(0,\infty; \mathcal{M}(\overline{\Omega}))$ denotes the space of all the weak--$*$ measurable mapping $\nu: [0,\infty) \rightarrow \mathcal{M}(\overline{\Omega})$ such that
		\begin{equation*}
		\esssup_{t>0} \|\nu(t,\cdot) \|_{\mathcal{M}(\overline{\Omega})} <\infty,
		\end{equation*}
		which can also be identified as the dual space of $L^1(0,\infty; C(\overline{\Omega}))$.
	\end{remark}

	\subsubsection{Short remark on the Reynolds stress} \label{remark reynold stress}
	
	The concentration measure $\mathfrak{R}$, that we may call \textit{Reynolds stress}, appearing in the weak formulation of the balance of momentum \eqref{weak formulation balance of momentum} arises from possible oscillations and/or concentrations in the convective and pressure terms
	\begin{equation*}
		\mathbbm{1}_{\varrho>0} \frac{\textbf{m}\otimes \textbf{m}}{\varrho} + p(\varrho) \mathbb{I}
	\end{equation*}
	when $\gamma>1$, while for $\gamma=1$, i.e. when the pressure is a linear function of the density $\varrho$, it is only the convective term that contributes to $\mathfrak{R}$. 
	
	By consistency, as clearly explained in \cite{AbbFeiNov}, we should have introduced the dissipation defect $\mathfrak{E} \in L^{\infty}_{\rm weak}(0,\infty; \mathcal{M}^+(\overline{\Omega}))$ of the total energy arising from possible concentrations and/or oscillations in the kinetic and potential energy terms
	\begin{equation*}
		\frac{1}{2} \frac{|\textbf{m}|^2}{\varrho} +P(\varrho).
	\end{equation*}
	Instead of \eqref{energy} we would then have
	\begin{equation} \label{energy with dissipation defect}
		E(\tau) = \int_{\Omega} \left[ \frac{1}{2} \frac{|\textbf{m}|^2}{\varrho} + P(\varrho) \right](\tau,\cdot) \ {\rm d}x + \int_{\overline{\Omega}} \rm d  \mathfrak{E}(\tau),
	\end{equation}
	satisfying the energy inequality \eqref{energy inequality}. Choosing a positive constant $\lambda>0$ such that
	\begin{equation*}
		\trace \left[ \mathbbm{1}_{\varrho>0}\frac{\textbf{m}\otimes \textbf{m}}{\varrho}+ \chi(\gamma)p(\varrho)  \mathbb{I} \right] =\frac{|\textbf{m}|^2}{\varrho} +d(\gamma-1) P(\varrho) \leq \lambda \left(\frac{1}{2} \frac{|\textbf{m}|^2}{\varrho} +P(\varrho)\right)
	\end{equation*}
	with
	\begin{equation*}
		\chi(\gamma)=\begin{cases}
			0 & \mbox{if } \gamma=1, \\
			1 &\mbox{if } \gamma>1,
		\end{cases}
	\end{equation*}
	and adapting Lemma 1.6 in \cite{Bas1}, we could recover the compatibility condition
	\begin{equation} \label{dissipation defect total energy}
	\frac{1}{\lambda} \trace [\mathfrak{R}(\tau)] \leq \mathfrak{E}(\tau) \quad \mbox{for a.e. } \tau>0.
	\end{equation}
	
	In this sense, our choice of the energy \eqref{energy} makes the problem more general and easier to handle with only one free quantity instead of two; however, it reduces to \eqref{energy with dissipation defect} simply choosing a dissipation defect $\mathfrak{E}$ of the type
	\begin{equation*}
		\mathfrak{E}(\tau) := \frac{1}{\lambda} \trace [\mathfrak{R}(\tau)] \quad \mbox{for a.e. } \tau>0.
	\end{equation*}
	
	\subsection{Set--up} \label{Set--up}
	First of all, we must fix the space $H$, the subset $D\subseteq H$ and the map $\mathcal{U}$ introduced at the beginning of Section \ref{Semiflow selection}. In this context
	\begin{itemize}
		\item $H:= W^{-k, 2}(\Omega) \times W^{-k, 2}(\Omega; \mathbb{R}^d) \times \mathbb{R}$, where the natural number $k> \frac{d}{2} +1$ is fixed;
		\item $D$ represents the space of initial data; it can be chosen as
		\begin{equation*}
			D := \left\{ [\varrho_0, \textbf{m}_0, E_0] \in H: \ \varrho_0 \in L^1(\Omega), \ \varrho_0\geq 0, \ \textbf{m}_0\in L^1(\Omega;\mathbb{R}^d) \mbox{ satisfying } \eqref{initial energy}
		\right\}
		\end{equation*}
		where 
		\begin{equation} \label{initial energy}
			\int_{\Omega} \left[ \frac{1}{2} \frac{|\textbf{m}_0|^2}{\varrho_0} + P(\varrho_0)\right] {\rm d}x \leq E_0;
		\end{equation}
		\item $\mathcal{T}=\mathfrak{D}([0,\infty); H)$ represents the trajectory space;
		
		\item $\mathcal{U}: D\rightarrow \mathcal{P}(\mathcal{T})$ represents the set--valued mapping that associate to every $[\varrho_0, \textbf{m}_0, E_0] \in D$ the family of dissipative solutions in the sense of Definition \ref{dissipative solution} arising from the initial data $[\varrho_0, \textbf{m}_0, E_0] $. More precisely, for every $[\varrho_0, \textbf{m}_0, E_0] \in D$
		\begin{align*}
			\mathcal{U}&[\varrho_0, \textbf{m}_0, E_0]= \\
			&\{ [\varrho, \textbf{m}, E] \in \mathcal{T}: \ [\varrho, \textbf{m}, E] \mbox{ is a dissipative solution with initial data } [\varrho_0, \textbf{m}_0, E_0] \}.
		\end{align*}
	\end{itemize}

	Notice that everything is well-defined; indeed, denoting with $L^1_+(\Omega)$ the space of non--negative integrable functions on $\Omega$, we can rewrite $D$ as
	\begin{equation*}
		\{ [\varrho_0, \textbf{m}_0, E_0] \in L^1_+(\Omega)\times L^1(\Omega;\mathbb{R}^d)\times \mathbb{R}: \ g(\varrho_0, \textbf{m}_0) \leq E_0 \},
	\end{equation*}
	so that it coincides with the epigraph of the function $g: L^1_+(\Omega)\times L^1(\Omega;\mathbb{R}^d) \rightarrow [0,+\infty]$ defined as
	\begin{equation*}
		g(\varrho_0, \textbf{m}_0) = \int_{\Omega} \left[ \frac{1}{2} \frac{|\textbf{m}_0|^2}{\varrho_0} + P(\varrho_0)\right] dx.
	\end{equation*}
	From \eqref{conditions pressure potential} and the fact that
	\begin{equation*}
		[\varrho, \textbf{m}] \mapsto \begin{cases}
			0 &\mbox{if }\textbf{m}=0, \\
			\frac{|\textbf{m}|^2}{\varrho} &\mbox{if } \varrho >0, \\
			\infty &\mbox{otherwise},
		\end{cases}
	\end{equation*}
	we get that the function $g$ is lower semi--continuous and convex and thus its epigraph is a closed convex subset of $L^{\gamma}(\Omega)\times L^{\frac{2\gamma}{\gamma+1}}(\Omega;\mathbb{R}^d)\times \mathbb{R}$ for all $\gamma\geq 1$.
	
	From our choice of $k$, we can use the Sobolev embedding
	\begin{equation} \label{Sobolev embedding}
		L^r(\Omega) \hookrightarrow\hookrightarrow W^{-k,2}(\Omega) \quad \mbox{for every } r\geq 1
	\end{equation}
	to conclude that
	\begin{equation*}
		C_{\rm weak, loc} ([0,\infty); L^r(\Omega)) \hookrightarrow C_{\rm loc} ([0,\infty); W^{-k,2}(\Omega)) \hookrightarrow \mathfrak{D}([0,\infty); W^{-k,2}(\Omega)),
	\end{equation*}
	for every $r\geq 1$. Furthermore, due to the weak continuity of the density $\varrho$ and the momentum $\textbf{m}$, for every fixed $T>0$ and every $t\in [0,T]$, from the energy inequality we can deduce that
	\begin{equation*}
		\|\varrho(t, \cdot)\|_{L^{\gamma}(\Omega)} \leq \sup_{t\in [0,T]} \| \varrho(t, \cdot) \|_{L^{\gamma}(\Omega)} \leq c\sup_{t\in [0,T]} \| 1+P(\varrho) (t,\cdot)\|_{L^1(\Omega)} \leq c(E_0, \Omega),
	\end{equation*}
	\begin{align*}
		\|\textbf{m}(t, \cdot)\|_{L^{\frac{2\gamma}{\gamma+1}}(\Omega; \mathbb{R}^d)} &\leq \sup_{t\in [0,T]} \| \textbf{m}(t, \cdot) \|_{L^{\frac{2\gamma}{\gamma+1}}(\Omega; \mathbb{R}^d)} \\
		&\leq \esssup_{t\in (0,T)} \left \| \frac{\textbf{m}}{\sqrt{\varrho}}(t,\cdot) \right\|_{L^2(\Omega; \mathbb{R}^d)} \|\sqrt{\varrho}(t, \cdot)\|_{L^{2\gamma}(\Omega)}\leq c(E_0, \Omega);
	\end{align*}
	
	Finally, from condition (i) of Definition \ref{dissipative solution} we also have that $\varrho(t,\cdot)\geq 0$ for all $t\geq 0$, while relation
	\begin{equation*}
		\int_{\Omega} \left[ \frac{1}{2} \frac{|\textbf{m}|^2}{\varrho}+ P(\varrho) \right](t,\cdot) dx \leq E(t-)=E(t)
	\end{equation*}
	holds for all $t\geq 0$ since the energy is convex and $\varrho$ and $\textbf{m}$ are weakly continuous in time. In particular, we have that for every $t\geq 0$
	\begin{equation*}
		[\varrho(t,\cdot), \textbf{m}(t, \cdot), E(t)] \in D.
	\end{equation*}
	
	\subsection{Main result} \label{Main result}
	Keeping in mind the notation introduced in the previous section, we are now ready to state our main result.
	
	\begin{theorem} \label{main result}
		System \eqref{continuity equation}--\eqref{initial conditions} admits a semiflow selection $U$ in the class of dissipative solutions, i.e., there exists a Borel measurable map $U: D \rightarrow \mathcal{T}$ such that
		\begin{equation*}
			U[\varrho_0, \textbf{m}_0, E_0] \in \mathcal{U}[\varrho_0, \textbf{m}_0, E_0] \mbox{ for every } [\varrho_0, \textbf{m}_0, E_0] \in D
		\end{equation*}
		satisfying the semigroup property: for any $[\varrho_0, \textbf{m}_0, E_0] \in D$ and any $t_1, t_2 \geq 0$
		\begin{equation*}
			U[\varrho_0, \textbf{m}_0, E_0] (t_1+t_2)= U[\varrho(t_1), \textbf{m}(t_1), E(t_1)] (t_2)
		\end{equation*}
		where $[\varrho, \textbf{m}, E]= U[\varrho_0, \textbf{m}_0, E_0]$.
	\end{theorem}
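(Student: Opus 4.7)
The result is set up precisely so that Theorem \ref{existence semiflow selection} can be applied to the set-valued map $\mathcal{U}: D\rightarrow\mathcal{P}(\mathcal{T})$ assigning to each initial datum the family of its dissipative solutions. The plan therefore consists in verifying one by one the five properties (\textbf{P1})--(\textbf{P5}), after which the abstract machinery produces a Borel measurable selection whose semigroup property is exactly the identity claimed in the statement.

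\textbf{(P1) and (P2): non-emptiness and compactness.} For (\textbf{P1}) I would invoke the existence theorem of Abbatiello, Feireisl and Novotn\'{y} \cite{AbbFeiNov} for $\gamma>1$, and rely on the same construction combined with Lemma \ref{auxiliary lemma} (replacing Lemma 8.1 of \cite{AbbFeiNov}) for the case $\gamma=1$. For (\textbf{P2}) I would take an arbitrary sequence of dissipative solutions with fixed initial datum, derive uniform bounds from the energy inequality \eqref{energy inequality} together with the coercivity coming from \eqref{conditions on F*} and \eqref{relation F and trsce-less part}, extract a convergent subsequence whose limit is again a dissipative solution (this is the content of Section \ref{Weak Sequential Stability}), and then upgrade the resulting convergence to convergence in $\mathcal{T}$ via Proposition \ref{characterization convergence non-compact hilbert}: the $\varrho$- and $\textbf{m}$-components are weakly continuous in time and embed into $\mathcal{T}$ through the Sobolev embedding \eqref{Sobolev embedding}, while the energy component $E$ is monotone non-increasing, so Proposition \ref{characterization convergence non-compact hilbert}(i) converts a.e. convergence into convergence in $\mathfrak{D}([0,\infty))$.

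\textbf{(P3), (P4), (P5).} Borel measurability (\textbf{P3}) of the compact-valued map $\mathcal{U}$ is obtained, as in \cite{BreFeiHof}, from the closedness of the graph of $\mathcal{U}$ in $D\times \mathcal{T}$; this closedness is again a consequence of the weak sequential stability argument of Section \ref{Weak Sequential Stability}, now performed with respect to a converging sequence of initial data. Property (\textbf{P4}) is essentially a bookkeeping check: the integral identities \eqref{weak formulation continuity equation}--\eqref{weak formulation balance of momentum}, the regularity in (i), the dissipation structure in (iv)--(v) and the c\`{a}gl\`{a}d energy inequality \eqref{energy inequality} are all invariant under the time shift $t\mapsto T+t$, and one uses the already-verified fact (from Section \ref{Set--up}) that $[\varrho(T,\cdot),\textbf{m}(T,\cdot),E(T)]\in D$ so that the shifted triple is a dissipative solution with the new initial datum. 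Property (\textbf{P5}) is the corresponding gluing statement: the c\`{a}gl\`{a}d convention forces $E_1(T-)=E_1(T)$ and one only needs to check the matching condition $E_1(T)\geq E_2(0+)$, which is built into (v) of Definition \ref{dissipative solution} applied to the second solution; the integral identities combine by splitting the test function across $t=T$ and using that $[\varrho_1,\textbf{m}_1]$ and $[\varrho_2,\textbf{m}_2]$ share the initial value $[\varrho_1(T,\cdot),\textbf{m}_1(T,\cdot)]$.

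\textbf{Main obstacle.} The decisive step is the weak sequential stability behind (\textbf{P2}) and (\textbf{P3}). The delicate point is the passage to the limit in the convective term $\mathbbm{1}_{\varrho>0}\textbf{m}\otimes\textbf{m}/\varrho$ and, for $\gamma>1$, in the pressure $p(\varrho)$: in general neither converges strongly, and the resulting defects must be incorporated into a limiting Reynolds measure $\mathfrak{R}$ that is compatible with the energy through the constant $\lambda$ appearing in \eqref{energy} and consistent with the c\`{a}gl\`{a}d energy inequality \eqref{energy inequality}. This is exactly what Section \ref{Weak Sequential Stability} is devoted to; once it is in hand, all five properties are established and Theorem \ref{existence semiflow selection} delivers the semiflow selection $U$, finishing the proof.
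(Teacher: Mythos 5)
Your proposal follows the same strategy as the paper: reduce Theorem \ref{main result} to Theorem \ref{existence semiflow selection} by verifying (\textbf{P1})--(\textbf{P5}), attributing (\textbf{P1}) to \cite{AbbFeiNov} (with Lemma \ref{auxiliary lemma} for $\gamma=1$), deriving (\textbf{P2}) and (\textbf{P3}) from the weak sequential stability of Section \ref{Weak Sequential Stability} together with the closed-graph/compactness criterion of \cite{StrVar} and Proposition \ref{characterization convergence non-compact hilbert}, and checking (\textbf{P4})--(\textbf{P5}) by the shift/gluing arguments of \cite{BreFeiHof}. The only difference is cosmetic: you spell out the bookkeeping for (\textbf{P4}) and (\textbf{P5}) that the paper simply delegates to \cite{BreFeiHof}, Lemmas 4.2--4.3, and your treatment (including the matching condition $E_1(T)\geq E_2(0+)$ built into item (v) of Definition \ref{dissipative solution}) is consistent with what that reference does.
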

	
	Theorem \ref{main result} is a consequence of Theorem \ref{existence semiflow selection} once we have verified that $\mathcal{U}$ satisfies properties (\textbf{P1})--(\textbf{P5}). To this end, we emphasise the following points.
	\begin{itemize}
		\item Property (\textbf{P1}) is equivalent in showing the \textit{existence} of a dissipative solution in the sense of Definition \ref{dissipative solution} for any fixed initial data $[\varrho_0, \textbf{m}_0, E_0] \in D$. For $\gamma>1$, this is the main result achieved in \cite{AbbFeiNov}, Section 3, while the case $\gamma=1$ can be done as well applying Lemma \ref{auxiliary lemma} below instead of Lemma 8.1 in \cite{AbbFeiNov}.
		\item Properties (\textbf{P2}) and (\textbf{P3}) hold true if we manage to prove the \textit{weak sequential stability} of the solution set $\mathcal{U}[\varrho_0, \textbf{m}_0, E_0]$ for every $[\varrho_0, \textbf{m}_0, E_0] \in D$ fixed, since it will  in particular imply compactness and the closed-graph property of the mapping
		\begin{equation*}
			D \ni [\varrho_0, \textbf{m}_0, E_0]  \rightarrow \mathcal{U}[\varrho_0, \textbf{m}_0, E_0] \in \mathcal{P}(\mathcal{T}),
		\end{equation*}
		and thus the Borel--measurality of $\mathcal{U}$, cf. Lemma 12.1.8 in \cite{StrVar}.
		\item Properties (\textbf{P4}) and (\textbf{P5}) can be easily checked following the same arguments done in \cite{BreFeiHof}, Lemma 4.2 and 4.3.
	\end{itemize}
	In conclusion, we are done if we show the weak sequential stability of the solution set $\mathcal{U}[\varrho_0, \textbf{m}_0, E_0]$ for every $[\varrho_0, \textbf{m}_0, E_0] \in D$ fixed. Being the proof quite elaborated, it is postponed to the next section.
	
	\begin{remark}
		As already done for the Euler and Navier--Stokes systems, cf. \cite{BreFeiHof}, \cite{Bas}, among all the dissipative solutions emanating from the same initial data it is possible to select only the \textit{admissible} ones, i.e., satisfying the physical principal of minimizing the total energy or equivalently, that are minimal with respect to relation $\prec$ defined as
		\begin{equation*}
			[\varrho^1, \textbf{m}^1, E^1] \prec [\varrho^2, \textbf{m}^2, E^2] \quad \Leftrightarrow \quad E^1(\tau\pm) \leq E^2(\tau\pm) \ \mbox{ for any }\tau \in (0,\infty),
		\end{equation*}
		where $[\varrho^i, \textbf{m}^i, E^i]$, $i=1,2$ are two dissipative solutions sharing the same initial data. Indeed, it is sufficient to start the selection considering in \eqref{selection functional} the functional $I_{1,k}$ with the function $f$ such that
		\begin{equation*}
			f \left( \left\langle \Phi(t) ; e_k \right\rangle \right) = f(E(t))  \quad \mbox{for all } t\geq 0,
		\end{equation*}
		where $\Phi(t)= [\varrho(t), \textbf{m}(t), E(t)]$; see \cite{BreFeiHof}, Lemma 5.2 for more details.
	\end{remark}

	\section{Weak sequential stability} \label{Weak Sequential Stability}
	
	This section will be entirely dedicated to the proof of the following result.
	
	\begin{proposition}
		Let $\{ [\varrho_n, \textbf{m}_n] \}_{n \in \mathbb{N}}$ be a  family of dissipative solutions with the corresponding total energies  $\{ E_n\}_{n \in \mathbb{N}}$ and initial data $\{ [\varrho_{0,n}, \textbf{m}_{0,n}, E_{0,n}] \}_{n \in \mathbb{N}}$ in the sense of Definition \ref{dissipative solution}. If 
		\begin{equation*}
			[\varrho_{0,n}, \textbf{m}_{0,n}, E_{0,n}] \rightarrow [\varrho_0, \textbf{m}_0, E_0] \quad \mbox{in } L^{\gamma}(\Omega) \times L^{\frac{2\gamma}{\gamma+1}}(\Omega; \mathbb{R}^d) \times \mathbb{R},
		\end{equation*}
		then, at least for suitable subsequences,
		\begin{equation} \label{convergence in trajectory space}
			[\varrho_n, \textbf{m}_n. E_n] \rightarrow [\varrho, \textbf{m}, E] \quad \mbox{in } \ \mathfrak{D}([0, \infty); W^{-k, 2}(\Omega) \times W^{-k, 2}(\Omega; \mathbb{R}^d) \times \mathbb{R}),
		\end{equation}
		where the natural number $k>\frac{d}{2}+1$ is fixed and $[\varrho, \textbf{m}]$ is another dissipative solution of the same problem with total energy $E$.
	\end{proposition}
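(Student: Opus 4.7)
The plan is to proceed along the classical scheme for compactness of dissipative solutions, exploiting that the trajectory space decomposes into three components (density, momentum, energy) whose convergence in $\mathfrak{D}([0,\infty); H)$ can be checked separately via Proposition \ref{characterization convergence non-compact hilbert}. The starting point is the uniform bound supplied by the energy inequality \eqref{energy inequality} applied with $\psi$ approximating characteristic functions of intervals and the convergence of the initial energies. This yields
\begin{equation*}
\sup_{n} \esssup_{t\ge 0} E_n(t) \le \sup_{n} E_{0,n} < \infty,
\end{equation*}
which in turn controls $\|\varrho_n(t,\cdot)\|_{L^\gamma(\Omega)}$, $\|\mathbf{m}_n(t,\cdot)\|_{L^{2\gamma/(\gamma+1)}(\Omega;\mathbb{R}^d)}$, and $\|\mathfrak{R}_n\|_{L^\infty_{\rm weak}(\mathcal{M}^+)}$, while the right-hand side of \eqref{energy inequality} gives locally uniform bounds on $F(\mathbb{D}\mathbf{u}_n)$ and $F^*(\mathbb{S}_n)$ in $L^1_{t,x}$. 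Combining the latter with \eqref{relation F and trsce-less part} and a Korn-type inequality for the traceless part controls $\mathbf{u}_n$ in $L^q_{\rm loc}(W^{1,q}_0)$, and the superlinearity \eqref{conditions on F*} gives equi-integrability of $\mathbb{S}_n$.

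Next I would extract converging subsequences. For $E_n$, since each is càglàd, non-increasing and uniformly bounded, Helly's selection theorem yields a non-increasing càglàd limit $E$ together with pointwise convergence outside a countable set; by part (i) of Proposition \ref{characterization convergence non-compact hilbert} this upgrades to convergence in $\mathfrak{D}([0,\infty);\mathbb{R})$. For $\varrho_n$ and $\mathbf{m}_n$ I would use weak continuity in time together with the compact Sobolev embedding \eqref{Sobolev embedding} and a variant of the Arzelà--Ascoli theorem (through equicontinuity in $W^{-k,2}$ obtained from the equations themselves, since $\partial_t \varrho_n$ and $\partial_t \mathbf{m}_n$ are bounded in $W^{-k,2}$ by the continuity and momentum equations) to obtain convergence in $C_{\rm loc}([0,\infty); W^{-k,2})$, which embeds continuously into $\mathfrak{D}([0,\infty); W^{-k,2})$. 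This establishes \eqref{convergence in trajectory space}.

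The substantive step is passing to the limit in the equations. The continuity equation \eqref{weak formulation continuity equation} is linear and the strong $W^{-k,2}$ convergence of $\varrho_n$ together with the weak-$*$ convergence of $\mathbf{m}_n$ is enough. For the momentum equation \eqref{weak formulation balance of momentum} the convective and pressure terms only pass to the limit in the sense of measures; I would introduce the defect
\begin{equation*}
\mathfrak{R} := \mathrm{weak\text{-}}(\ast)\lim_{n\to\infty}\left[\mathbbm{1}_{\varrho_n>0}\frac{\mathbf{m}_n\otimes\mathbf{m}_n}{\varrho_n} + p(\varrho_n)\mathbb{I}\right] - \left[\mathbbm{1}_{\varrho>0}\frac{\mathbf{m}\otimes\mathbf{m}}{\varrho}+p(\varrho)\mathbb{I}\right],
\end{equation*}
which lies in $L^\infty_{\rm weak}(\mathcal{M}^+(\overline{\Omega};\mathbb{R}^{d\times d}_{\rm sym}))$ by the convexity of $[\varrho,\mathbf{m}]\mapsto \mathbbm{1}_{\varrho>0}\mathbf{m}\otimes\mathbf{m}/\varrho$ and of $p$. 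For $\mathbb{S}_n$, the superlinearity of $F^*$ gives weak $L^1$ compactness of a suitable truncation (Dunford--Pettis), with no concentration contribution to the momentum equation since $\mathbb{S}_n$ pairs against a smooth test function; the identification $\mathbb{S}\in\partial F(\mathbb{D}\mathbf{u})$ in the limit will follow from Fenchel's identity \eqref{relation viscous stress symmetric gradient} and the lower semicontinuity of the $F$ and $F^*$ functionals, combined with the energy inequality. Finally, passing to the limit in \eqref{energy inequality} via Fatou's lemma for the dissipative terms and in \eqref{energy} with the help of \eqref{dissipation defect total energy} provides the compatibility of $E$ with the limiting $\varrho,\mathbf{m},\mathfrak{R}$, while $\mathbf{m}=\varrho\mathbf{u}$ is recovered from strong convergence of $\varrho_n$ and weak convergence of $\mathbf{u}_n$ together with the structure of the defect.

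The main obstacle I expect is not the weak compactness itself but the proper identification and positivity of $\mathfrak{R}$ together with the simultaneous identification of $\mathbb{S}$: one must verify that the limit of $F(\mathbb{D}\mathbf{u}_n)+F^*(\mathbb{S}_n)$ dominates $F(\mathbb{D}\mathbf{u})+F^*(\mathbb{S})$ in the limiting energy inequality, while still ensuring $\mathbf{m}=\varrho\mathbf{u}$ almost everywhere. Here the argument leans on the convexity of $F$, the Fenchel--Young inequality, and the fact that the trace of the excess in the convective/pressure terms is controlled by the dissipation defect through the scalar $\lambda$ in \eqref{dissipation defect total energy}, allowing the extra mass to be absorbed into $\mathfrak{R}$ without spoiling positivity.
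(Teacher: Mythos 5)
Your overall convergence scheme matches the paper's: Helly's selection theorem together with part (i) of Proposition \ref{characterization convergence non-compact hilbert} for $E_n$; the compact embedding $L^r(\Omega)\hookrightarrow\hookrightarrow W^{-k,2}(\Omega)$ together with an Arzel\`{a}--Ascoli argument built on the time-derivative bounds supplied by the equations for $\varrho_n,\mathbf{m}_n$; weak-$*$ compactness in $L^\infty_{\rm weak}(0,\infty;\mathcal{M})$ for the convective, pressure and Reynolds stress terms; and lower semi-continuity of $F$, $F^*$ for the limit energy inequality. However, there is a genuine gap at the key step where you write that $\mathbf{m}=\varrho\mathbf{u}$ ``is recovered from strong convergence of $\varrho_n$ and weak convergence of $\mathbf{u}_n$ together with the structure of the defect.'' The density converges only in $C_{\rm weak,loc}([0,\infty);L^\gamma(\Omega))$, i.e.\ weakly in the spatial variable for each $t$ (the strong convergence you have is only in $W^{-k,2}(\Omega)$, which cannot be paired with a $W^{1,q}$ velocity), while $\mathbf{u}_n$ converges only weakly in $L^q_{\rm loc}(W^{1,q}_0)$. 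This is a weak--weak product with no a priori limit, and the defect in the momentum equation records concentrations in $\mathbbm{1}_{\varrho_n>0}\mathbf{m}_n\otimes\mathbf{m}_n/\varrho_n$, not in $\varrho_n\mathbf{u}_n$, so it offers no help here. The paper devotes a separate auxiliary lemma to this step: truncation of $\mathbf{u}_n$ justified by equi-integrability of $\varrho_n\mathbf{u}_n$, spatial regularization by convolution, the compact embedding of $W^{1,q}_0\cap L^\infty(\Omega)$ into the Orlicz space $L_\Psi(\Omega)$ dual to the $L\log L$-type space carrying $\varrho_n$, and the equation $\partial_t\varrho_n=-\divv_x\mathbf{m}_n$ to pass to the limit in $\int\varrho_n\,(\theta_\delta*\mathbf{u}_n)$. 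That compensated-compactness-type argument is the substance behind condition (iv) of Definition \ref{dissipative solution}, and it is absent from your plan.

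Two smaller points. First, the defect you define subtracts only the convective and pressure contributions; the limiting momentum equation also carries the weak-$*$ limit $\widetilde{\mathfrak{R}}$ of the incoming Reynolds stresses $\mathfrak{R}_n$, which must be added before applying the positivity argument, and for $\gamma=1$ the pressure is linear so that there is no pressure defect at all (the paper tracks this with the factor $\chi(\gamma)$). Second, Definition \ref{dissipative solution} requires an \emph{equality} in \eqref{energy}, whereas passing to the limit directly yields only the inequality $\frac{1}{\lambda}\trace[\check{\mathfrak{R}}(\tau)]\le\mathfrak{E}(\tau)$; the paper repairs this by replacing $\check{\mathfrak{R}}$ with $\check{\mathfrak{R}}+\psi(t)\mathbb{I}$ for a suitable nonnegative scalar $\psi$, which leaves \eqref{weak formulation balance of momentum} unchanged since $\int_\Omega\divv_x\bm{\varphi}\,\mathrm{d}x=0$ for admissible test functions. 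You should also note that the pointwise inclusion $\mathbb{S}\in\partial F(\mathbb{D}\mathbf{u})$ is not required by the definition --- only the energy inequality featuring $F(\mathbb{D}\mathbf{u})+F^*(\mathbb{S})$ is, which indeed follows from weak lower semi-continuity --- so pursuing that identification is an unnecessary detour.
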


	The proof will be divided in four steps:
	\begin{enumerate}
		\item in Section \ref{Convergences} we will first deduce a family of uniform bounds and convergences, including the limits $\varrho$ of the densities, $\textbf{m}$ of the momenta and $\textbf{u}$ of the velocities;
		\item in Section \ref{Limit passage} we will pass to the limit in the weak formulation of the continuity equation and the balance of momentum;
		\item in order to show that $\textbf{m}$ can be written as the product $\varrho \textbf{u}$, in Section \ref{Auxuliary lemma} we will state and prove Lemma \ref{auxiliary lemma};
		\item finally, in Section \ref{Energy convergence} we will focus on finding the limit $E$ of the energies.
	\end{enumerate}

	\subsection{Uniform bounds and  limits establishment} \label{Convergences}
	
	Our first goal is to show the following convergences, passing to suitable subsequences as the case may be:
	\begin{align}
		\varrho_n \rightarrow \varrho \quad &\mbox{in } C_{\rm weak, loc}([0,\infty); L^{\gamma}(\Omega)), \label{convergence densities} \\
		\textbf{m}_n \rightarrow \textbf{m} \quad &\mbox{in } C_{\rm weak, loc}([0,\infty); L^{\frac{2\gamma}{\gamma+1}}(\Omega; \mathbb{R}^d)), \label{convergence momenta} \\
		\textbf{u}_n \rightharpoonup \textbf{u} \quad &\mbox{in } L^q_{\rm loc}(0,\infty; W^{1,q}_0(\Omega; \mathbb{R}^d)) \label{convergence velocities}\\
		\mathbb{S}_n \rightharpoonup \mathbb{S} \quad &\mbox{in } L^1_{\rm loc}(0,\infty; L^1(\Omega; \mathbb{R}^{d\times d})), \label{convergence viscous stress tensor}\\
		\mathbbm{1}_{\varrho_n>0}\frac{\textbf{m}_n \otimes \textbf{m}_n}{\varrho_n} \overset{*}{\rightharpoonup} \overline{\mathbbm{1}_{\varrho>0} \frac{\textbf{m}\otimes \textbf{m}}{\varrho}} \quad &\mbox{in }L^{\infty}_{\rm weak}(0,\infty; \mathcal{M}(\overline{\Omega}; \mathbb{R}^{d\times d}_{\rm sym})), \label{convergence convective terms}\\
		p(\varrho_n) \overset{*}{\rightharpoonup}
		\overline{p(\varrho)} \quad &\mbox{in } 
		L^{\infty}_{\rm weak}(0,\infty; 	\mathcal{M}(\overline{\Omega})) \quad\mbox{if }\gamma>1, \label{convergence pressure terms}\\
		\mathfrak{R}_n \overset{*}{\rightharpoonup} \widetilde{\mathfrak{R}}  \quad &\mbox{in } L^{\infty}_{\rm weak}(0,\infty; \mathcal{M}^+(\overline{\Omega}; \mathbb{R}^{d\times d}_{\rm sym})) \label{convergence defects}.
	\end{align}
	From our hypothesis, all the initial energies are uniformly bounded by a positive constant $\overline{E}$ independent of $n$; specifically,
	\begin{equation*}
		\int_{\Omega} \left[ \frac{1}{2} \frac{|\textbf{m}_{0,n}|^2}{\varrho_{0,n}} + P(\varrho_{0,n}) \right] dx \leq \overline{E}.
	\end{equation*}
	From \eqref{energy} and the energy inequality \eqref{energy inequality} it is easy to deduce the following uniform bounds
	\begin{align}
		\esssup_{t>0} \left\| \frac{\textbf{m}_n}{\sqrt{\varrho_n}} (t,\cdot) \right \| _{L^2(\Omega; \mathbb{R}^d)} &\leq c_1=c_1(\overline{E}), \label{estimate kinetic energy} \\
		\esssup_{t>0} \| P(\varrho_n)(t,\cdot)\|_{L^1(\Omega)} & \leq c(\overline{E}), \label{estimate pressure potential}\\
		\esssup_{t>0} \|\trace[\mathfrak{R}_n(t)]\|_{\mathcal{M}^+(\overline{\Omega})} &\leq c(\overline{E}), \label{estimate defect energy}\\
		\| F (\mathbb{D} \textbf{u}_n) \|_{L^1((0,\infty)\times \Omega)} &\leq c(\overline{E}), \label{estimate F} \\
		\| F^* (\mathbb{S}_n) \|_{L^1((0,\infty)\times \Omega)} &\leq c(\overline{E}). \label{estimate F*}
	\end{align}
	
	\subsubsection{Convergences of $\varrho_n$ and $\textbf{m}_n$}
	For $\gamma>1$, from \eqref{pressure potential} and \eqref{estimate pressure potential} we can easily deduce, passing to a suitable subsequence as the case may be,
	\begin{equation} \label{convergence densities gamma>1}
		\varrho_n \overset{*}{\rightharpoonup} \varrho \quad \mbox{in } L^{\infty}(0,\infty; L^{\gamma}(\Omega)).
	\end{equation}
	Similarly, from \eqref{convergence densities gamma>1}, \eqref{estimate kinetic energy} and the fact that for a.e. $t>0$
	\begin{equation*}
		\|\textbf{m}_n(t,\cdot)\|_{L^{\frac{2\gamma}{\gamma+1}}(\Omega;\mathbb{R}^d)} \leq \left\| \frac{\textbf{m}_n}{\sqrt{\varrho_n}} (t,\cdot) \right \| _{L^2(\Omega; \mathbb{R}^d)}\| \sqrt{\varrho_n}(t,\cdot)\|_{L^{2\gamma}(\Omega)} \leq c(\overline{E}),
	\end{equation*}
	passing to a suitable subsequence, we obtain
	\begin{equation} \label{convergence momenta gamma>1}
		\textbf{m}_n \overset{*}{\rightharpoonup} \textbf{m} \quad \mbox{in } L^{\infty}(0,\infty; L^{\frac{2\gamma}{\gamma+1}}(\Omega; \mathbb{R}^d)).
	\end{equation}
	
	Since the $L^1$-space is not reflexive, for $\gamma=1$ a more detailed analysis is needed. If we consider the Young function $\Phi(z)=z\log^+z$, the densities $\{ \varrho_n \}_{n\in \mathbb{N}}$ can be seen as uniformly bounded in $L^{\infty}(0,\infty;L_{\Phi}(\Omega))$, where $L_{\Phi}(\Omega)$ is the Orlicz space associated to $\Phi$; indeed, noticing that
	\begin{equation*}
		\varrho\log^+ \varrho= \begin{cases}
			0 &\mbox{if } 0\leq \varrho <1, \\
			\varrho \log \varrho &\mbox{if } \varrho \geq 1,
		\end{cases}
		\quad\mbox{and} \quad  -\frac{1}{e} \leq \varrho\log \varrho\leq 0 \quad \mbox{if } 0\leq \varrho \leq 1,
	\end{equation*}
	from \eqref{estimate pressure potential}, for a.e. $\tau >0$ we have
	\begin{align*}
		\int_{\Omega}\varrho \log^+ \varrho (\tau,\cdot) \ dx &= \int_{\{ \varrho \geq 1 \}} \varrho \log \varrho(\tau, \cdot) \ dx \\
		&\leq \int_{\Omega} \varrho \log \varrho (\tau, \cdot) \ dx - \int_{\{ 0\leq \varrho < 1 \}} \varrho \log \varrho (\tau,\cdot) \ dx \\
		&\leq c(\overline{E}) + \frac{1}{e} |\Omega|.
	\end{align*}
	As the function $\Phi$ satisfies the $\Delta_2$--condition, $L_{\Phi}(\Omega)$ can be seen as the dual space of the Orlicz space $L_{\Psi}(\Omega)$, where $\Psi$ denotes the complementary Young function of $\Phi$, and hence,  passing to a suitable subsequence, we get
	\begin{equation} \label{convergence densities gamma=1}
	\varrho_n \overset{*}{\rightharpoonup} \varrho \quad \mbox{in } L^{\infty}(0,\infty;L_{\Phi}(\Omega)).
	\end{equation}
	
	We are  now able to prove the uniform integrability of the sequence $\{\textbf{m}_n(t,\cdot)\}_{n\in \mathbb{N}} \subset L^1(\Omega; \mathbb{R}^d)$ for a.e. $t>0$. More precisely, we want to show that for every $\varepsilon>0$ there exists $\delta=\delta(\varepsilon)$ such that for all $n\in \mathbb{N}$ and a.e. $t>0$
	\begin{equation*}
		\int_{M} |\textbf{m}_n|(t,\cdot) dx < \varepsilon \quad \mbox{for all } M\subset \Omega \mbox{ such that } |M|<\delta.
	\end{equation*}
	Fix $\varepsilon>0$ and choose $\tilde{\varepsilon}= \tilde{\varepsilon}(\varepsilon)$ such that $\tilde{\varepsilon} = (\varepsilon / c_1)^2$, with $c_1$ as in \eqref{estimate kinetic energy}. The superlinearity of $P$ \eqref{conditions pressure potential} combined with the uniform bound \eqref{estimate pressure potential} guarantees that the sequence $\{ \varrho_n(t,\cdot) \}_{n\in \mathbb{N}} \subset L^1(\Omega)$ is uniformly integrable for a.e. $t>0$, as a consequence of de la Vall\'{e}e--Poussin criterion; thus, there exists $\delta= \delta(\tilde{\varepsilon})$ such that for all $n\in \mathbb{N}$ and a.e. $t>0$, 
	\begin{equation} \label{equintegrability densities}
		\int_{M} \varrho_n(t,\cdot) dx < \tilde{\varepsilon} \quad \mbox{for all } M\subset \Omega \mbox{ such that } |M|<\delta.
	\end{equation}
	Fix $M \subset \Omega$ with $|M|< \delta$; applying H\"{o}lder's inequality, \eqref{estimate kinetic energy}, \eqref{equintegrability densities} and writing
	\begin{equation*}
		\textbf{m} = \sqrt{\varrho} \ \frac{\textbf{m}}{\sqrt{\varrho}},
	\end{equation*}
	we get that for all $n\in \mathbb{N}$ and a.e. $t>0$ 
	\begin{equation*}
	\int_{M} |\textbf{m}_n|(t,\cdot) dx \leq \left( \int_{M} \varrho_n(t,\cdot) dx \right)^{\frac{1}{2}} \left( \int_{M} \frac{|\textbf{m}_n|^2}{\varrho_n}(t,\cdot) dx \right)^{\frac{1}{2}} < c_1 \tilde{\varepsilon}^{\frac{1}{2}} =\varepsilon.
	\end{equation*}
	Dunford--Pettis theorem ensures that for a.e. $t>0$ the sequence $\{\textbf{m}_n(t,\cdot)\}_{n\in \mathbb{N}} \subset L^1(\Omega; \mathbb{R}^d)$ is relatively compact with respect to the weak topology; in particular, we have that
	\begin{equation*}
		\textbf{m}_n \rightharpoonup \textbf{m} \quad \mbox{in } L^1_{\rm loc} (0,\infty; L^1(\Omega; \mathbb{R}^d)).
	\end{equation*}
	
	Next, to get \eqref{convergence densities} from \eqref{convergence densities gamma>1} and \eqref{convergence densities gamma=1} we have to show that the family of $t$--dependent functions
	\begin{equation*}
		f_n(t) := \int_{\Omega} \varrho_n(t,\cdot) \phi dx
	\end{equation*}
	converges strongly in $C([a,b])$ for any $\phi \in C_c^{\infty}(\Omega)$ and any compact subset $[a,b]\subset (0,\infty)$. Recalling that the densities $\varrho_n$ and the momenta $\textbf{m}_n$ are weakly continuous in time, the sequences $\{ f_n \}_{n\in \mathbb{N}}$ and $\{ f_n' \}_{n\in \mathbb{N}}$ are uniformly bounded in $[a,b]$, since for all $\gamma \geq 1$ 
	\begin{equation*}
		\sup_{t\in [a,b]} |f_n(t)| \leq \sup_{t\in [a,b]}\|\varrho_n(t,\cdot)\|_{L^{\gamma}(\Omega)} \|\phi\|_{L^{\gamma'}(\Omega)} \leq c(\phi),
	\end{equation*}
	while from the uniform boundedness of the momenta $\textbf{m}_n$ in $L^{\infty}(0,\infty; L^p(\Omega; \mathbb{R}^d))$ with $p=\frac{2\gamma}{\gamma+1}$ and $\gamma\geq 1$,
	\begin{equation*}
	\sup_{t\in [a,b]} |f_n'(t)| \leq \sup_{t\in [a,b]} \| 	\textbf{m}_n (t,\cdot)\|_{L^p(\Omega:\mathbb{R}^d)} \|\nabla_x \phi\|_{L^{p'}(\Omega;\mathbb{R}^d)} \leq c(\phi).
	\end{equation*}
	As a consequence of the Arzel\`{a}-Ascoli theorem, we get \eqref{convergence densities}. A similar argument can be applied to get \eqref{convergence momenta}.
	
	Finally, notice that, from the Sobolev embedding \eqref{Sobolev embedding}, $\varrho_n: [0,\infty) \rightarrow W^{-k,2}(\Omega)$ is a continuous function for all $n\in \mathbb{N}$, and thus by Proposition \ref{characterization convergence non-compact hilbert} showing
	\begin{equation*}
		\varrho_n \rightarrow \varrho \quad \mbox{in } \mathfrak{D}([0,\infty); W^{-k,2}(\Omega))
	\end{equation*}
	is equivalent to 
	\begin{equation*}
		\sup_{t\in [0,M]} \left| \int_{\Omega} (\varrho_n-\varrho)(t,\cdot) e_k dx \right|  \rightarrow 0, \quad \mbox{for all } k,M \in \mathbb{N},
	\end{equation*}
	where $\{ e_k \}_{k\in \mathbb{N}}$ is an orthonormal basis of $L^2(\Omega)$; but this easily follows from convergence \eqref{convergence densities} and Parseval's identity. The same argument can be applied to show that
	\begin{equation*}
		\textbf{m}_n \rightarrow \textbf{m} \quad \mbox{in } \mathfrak{D}([0,\infty); W^{-k,2}(\Omega; \mathbb{R}^d)).
	\end{equation*}
	
	\subsubsection{Convergences of $\textbf{u}_n$ and $\mathbb{S}_n$}
	
	From \eqref{relation F and trsce-less part} and \eqref{estimate F} we can also deduce that
	\begin{equation*}
	\left\| \mathbb{D}\textbf{u}_n -\frac{1}{d} (\divv_x \textbf{u}_n) \mathbb{I}\right\|_{L^q((0,\infty)\times \Omega;\mathbb{R}^{d\times d})} \leq c(\overline{E}).
	\end{equation*}
	Fixing a compact interval $[a,b] \subset (0,+\infty)$ and an open bounded interval $I$ such that $[a,b] \subset I$, the previous inequality combined with the $L^q$-version of the trace--free Korn's inequality (see for instance \cite{BreCiaDie}, Theorem 3.1) implies
	\begin{equation*}
	\| \nabla_x \textbf{u}_n\|_{L^q(I\times \Omega; \mathbb{R}^{d\times d})} \leq c(\overline{E});
	\end{equation*}
	the standard Poincar\'{e} inequality ensures then
	\begin{equation*}
	\| \textbf{u}_n \|_{L^q(I; W_0^{1,q}(\Omega;\mathbb{R}^d))} \leq c(\overline{E}),
	\end{equation*}
	and thus we get convergence \eqref{convergence velocities}.
	
	The superlinearity of $F^*$ \eqref{conditions on F*} combined with \eqref{estimate F*}, the de la Vall\'{e}e--Poussin criterion and the Dunford--Pettis theorem, gives convergence \eqref{convergence viscous stress tensor}.
	
	\subsubsection{Convergences of $p(\varrho_n)$, $\frac{\textbf{m}_n \otimes \textbf{m}_n}{\varrho_n}$ and $\mathfrak{R}_n$.}
	Notice that in \eqref{convergence pressure terms} we don't consider the case $\gamma=1$ because it reduces to \eqref{convergence densities}. On the other side, when $\gamma>1$, estimates \eqref{estimate kinetic energy} and \eqref{estimate pressure potential}, combined with the fact that
	\begin{equation*}
		\left| \frac{\textbf{m}\otimes \textbf{m}}{\varrho} \right| \leq c \frac{|\textbf{m}|^2}{\varrho}, \quad p(\varrho) \leq c\left(1+ P(\varrho) \right)
	\end{equation*}
	for some positive constants $c$, imply that the pressures $\{ p(\varrho_n(t,\cdot)) \}_{n\in \mathbb{N}}$ and the convective terms $\left\{ \frac{\textbf{m}_n \otimes \textbf{m}_n}{\varrho_n}(t,\cdot) \right\}_{n\in \mathbb{N}}$ are uniformly bounded in the non--reflexive $L^1$--space for a.e. $t>0$. The idea is then to see the $L^1$--space as embedded in the space of bounded Radon measures $\mathcal{M}(\overline{\Omega})$, which in turn can be identified as the dual space of the separable space $C(\overline{\Omega})$. Accordingly, introducing the space $L^{\infty}_{\rm weak}(0,\infty; \mathcal{M}(\overline{\Omega}))$, cf. Remark \ref{remark on space of measures}, we obtain convergences \eqref{convergence convective terms} and \eqref{convergence pressure terms}. Finally, estimate \eqref{estimate defect energy} guarantees convergence \eqref{convergence defects}.
	
	\subsection{Limit passage} \label{Limit passage}
	
	We are now ready to pass to the limit in the weak formulation of the continuity equation and the balance of momentum, obtaining that
	\begin{equation*}
		\left[ \int_{\Omega} \varrho \varphi(t,\cdot) dx \right]_{t=0}^{t=\tau}= \int_{0}^{\tau} \int_{\Omega} [\varrho \partial_t \varphi + \textbf{m}\cdot \nabla_x \varphi] dxdt
	\end{equation*}
	holds $\tau >0$ and any $\varphi \in C_c^1([0,\infty)\times \overline{\Omega})$, with $\varrho(0,\cdot)=\varrho_0$, and
		\begin{align*}
			\left[ \int_{\Omega} \textbf{m}\cdot \bm{\varphi}(t, \cdot) dx \right]_{t=0}^{t=\tau} &= \int_{0}^{\tau}\int_{\Omega} \left[ \textbf{m} \cdot \partial_t \bm{\varphi} +  \overline{\mathbbm{1}_{\varrho>0}\frac{\textbf{m}\otimes \textbf{m}}{\varrho}}:\nabla_x \bm{\varphi} +\overline{p(\varrho)}\divv_x\bm{\varphi}\right] dxdt \\
			&- \int_{0}^{\tau} \int_{\Omega} \mathbb{S}: \nabla_x \bm{\varphi} \ dxdt + \int_{0}^{\tau} \int_{\overline{\Omega}} \nabla_x \bm{\varphi} : d\widetilde{\mathfrak{R}} \ dt
		\end{align*}
		holds for any $\tau>0$ and any $\bm{\varphi} \in C^1_c([0,\infty)\times \overline{\Omega}; \mathbb{R}^d)$, $\bm{\varphi}|_{\partial \Omega}=0$ with $\textbf{m}(0,\cdot)=\textbf{m}_0$. The last integral identity can be rewritten as
		\begin{align*}
			\left[ \int_{\Omega} \textbf{m}\cdot \bm{\varphi}(t, \cdot) dx \right]_{t=0}^{t=\tau} &= \int_{0}^{\tau}\int_{\Omega} \left[ \textbf{m} \cdot \partial_t \bm{\varphi} + \mathbbm{1}_{\varrho>0} \frac{\textbf{m}\otimes \textbf{m}}{\varrho}:\nabla_x \bm{\varphi} +p(\varrho)\divv_x\bm{\varphi}\right] dxdt \\
			&-\int_{0}^{\tau} \int_{\Omega} \mathbb{S}: \nabla_x \bm{\varphi} dxdt + \int_{0}^{\tau} \int_{\overline{\Omega}} \nabla_x \bm{\varphi} : d\check{\mathfrak{R}} \ dt
		\end{align*}
		where $\check{\mathfrak{R}} \in L^{\infty}_{\rm weak}(0,\infty; \mathcal{M}(\overline{\Omega}; \mathbb{R}^{d\times d}_{\rm sym}))$ is such that
		\begin{equation} \label{defect balance of momentum}
			d\check{\mathfrak{R}} = d\widetilde{\mathfrak{R}} +  \left(\overline{\mathbbm{1}_{\varrho>0}\frac{\textbf{m}\otimes \textbf{m}}{\varrho}} -\mathbbm{1}_{\varrho>0}\frac{\textbf{m}\otimes \textbf{m}}{\varrho}\right) dx + \left( \overline{p(\varrho)}-p(\varrho)\right)\chi(\gamma)\mathbb{I} dx,
		\end{equation}
		with
		\begin{equation*}
			\chi(\gamma) =\begin{cases}
				0 &\mbox{if } \gamma=1, \\
				1 &\mbox{if } \gamma>1.
			\end{cases}
		\end{equation*}
		We can prove the stronger condition
		\begin{equation} \label{defect from balance of momentum}
			\check{\mathfrak{R}} \in L^{\infty}_{\rm weak}(0,\infty; \mathcal{M}^+(\overline{\Omega}; \mathbb{R}^{d\times d}_{\rm sym}));
		\end{equation}
		more precisely, we want to show that for all $\xi \in \mathbb{R}^d$, all open sets $\mathcal{B}\subset \Omega$ and a.e. $\tau >0$
		\begin{equation*}
			\check{\mathfrak{R}}(\tau) :(\xi \otimes \xi) (\mathcal{B}) \geq 0.
		\end{equation*}
		we can rewrite the term on the left--hand side as
		\begin{align*}
			\int_{\mathcal{B}} (\xi \otimes \xi): d\check{\mathfrak{R}}(\tau+) &= \lim_{d\rightarrow 0} \int_{\tau}^{\tau+d} \int_{\mathcal{B}} (\xi \otimes \xi): d\check{\mathfrak{R}}(t) dt \\
			&= \lim_{d\rightarrow 0} \int_{0}^{\infty} \int_{\overline{\Omega}} \mathbbm{1}_{[\tau,\tau+d]\times \mathcal{B}} \ (\xi \otimes \xi): d\check{\mathfrak{R}}(t) dt 
		\end{align*}
		Since the indicator function $\mathbbm{1}_{[\tau-d,\tau+d]\times \mathcal{B}}$ can be approximated by some non--negative test functions, it is enough to show that 
		\begin{equation*}
			\int_{0}^{\infty} \int_{\overline{\Omega}} \varphi \ (\xi \otimes \xi): d\check{\mathfrak{R}}(t) dt \geq 0
		\end{equation*}
		holds for all $\varphi \in C_c^{\infty}((0,T)\times \Omega)$, $\varphi\geq 0$. We can notice that the first term on the right--hand side of \eqref{defect balance of momentum} will obviously satisfy the above inequality since $\widetilde{\mathfrak{R}}$ belongs to $L^{\infty}_{\rm weak}(0,\infty; \mathcal{M}^+(\overline{\Omega}; \mathbb{R}^{d\times d}_{\rm sym}))$, and
		\begin{equation*}
			\int_{0}^{\infty} \int_{\overline{\Omega}} \left( \overline{p(\varrho)}-p(\varrho)\right) \mathbb{I} : (\xi \otimes \xi) \ \varphi \ dxdt = \int_{0}^{\infty} \int_{\overline{\Omega}} \left( \overline{p(\varrho)}-p(\varrho)\right) |\xi|^2  \varphi \ dxdt \geq 0,
		\end{equation*}
		since $\varrho \mapsto p(\varrho)$ is convex and weakly lower semi--continuous in $L^1$, which implies $\overline{p(\varrho)} \geq p(\varrho)$, see for instance \cite{Fei}, Theorem 2.11. Finally, following the same idea developed in \cite{FeiHof}, Section 3.2, as a consequence of \eqref{convergence convective terms} we can write
		\begin{align} \label{limit of convective terms}
		\begin{aligned}
			\int_{0}^{\infty} \int_{\overline{\Omega}} \left( \overline{\mathbbm{1}_{\varrho>0}\frac{\textbf{m}\otimes \textbf{m}}{\varrho}} -\mathbbm{1}_{\varrho>0}\frac{\textbf{m}\otimes \textbf{m}}{\varrho} \right)&: (\xi \otimes \xi) \ \varphi \ dxdt \\
			= \lim_{n\rightarrow \infty} \int_{0}^{\infty} \int_{\overline{\Omega}} &\left( \mathbbm{1}_{\varrho_n>0}\frac{\textbf{m}_n \otimes \textbf{m}_n}{\varrho_n} - \mathbbm{1}_{\varrho>0}\frac{\textbf{m}\otimes \textbf{m}}{\varrho} \right): (\xi \otimes \xi) \ \varphi \ dxdt \\
			= \lim_{n\rightarrow \infty} \int_{0}^{\infty} \int_{\overline{\Omega}} &\left( \mathbbm{1}_{\varrho_n>0}\frac{|\textbf{m}_n \cdot \xi|^2}{\varrho_n} - \mathbbm{1}_{\varrho>0}\frac{|\textbf{m}\cdot \xi|^2}{\varrho} \right) \ \varphi \ dxdt.
			\end{aligned}
		\end{align}
		The Cauchy--Schwarz inequality allows to write $|\textbf{m} \cdot \xi|^2 \leq |\textbf{m}|^2 |\xi|^2$, and thus by \eqref{estimate kinetic energy} we obtain
		\begin{equation*}
			\esssup_{t>0} \left\| \mathbbm{1}_{\varrho_n>0}\frac{|\textbf{m}_n \cdot \xi|^2}{\varrho_n} (t,\cdot) \right\|_{L^1(\Omega)} \leq c(\overline{E}, \xi);
		\end{equation*}
		it is possible then to find the limit
		\begin{equation*}
			\mathbbm{1}_{\varrho_n>0}\frac{|\textbf{m}_n \cdot \xi|^2}{\varrho_n} \overset{*}{\rightharpoonup}  \overline{\mathbbm{1}_{\varrho_>0} \frac{|\textbf{m}\cdot \xi|^2}{\varrho}} \quad \mbox{in } L^{\infty}_{\rm weak}(0,\infty; \mathcal{M}(\overline{\Omega}))
		\end{equation*}
		and rewrite the first line in \eqref{limit of convective terms} as 
		\begin{equation*}
			\int_{0}^{\infty} \int_{\overline{\Omega}} \left(   \overline{\mathbbm{1}_{\varrho_>0}\frac{|\textbf{m}\cdot \xi|^2}{\varrho}} - \mathbbm{1}_{\varrho>0}\frac{|\textbf{m}\cdot \xi|^2}{\varrho} \right) \ \varphi \ dxdt.
		\end{equation*}
		as in the previous passage, \eqref{defect from balance of momentum} will now follow from the weak lower semi--continuity on $D$ of the convex function $[\varrho, \textbf{m}] \mapsto \frac{|\textbf{m}\cdot \xi|^2 }{\varrho}$. We proved in particular that the pair of functions $[\varrho, \textbf{m}]$ satisfies conditions (ii) and (iii) of Definition \ref{dissipative solution}. However, $\check{\mathfrak{R}}$ has to be slightly modified in order to get the energy \eqref{energy}, as we will see in Section \ref{Energy convergence}.
		
		\subsection{Auxiliary lemma} \label{Auxuliary lemma}
		
		In order to prove that
		\begin{equation*}
			\textbf{m}=\varrho \textbf{u} \quad \mbox{a.e. in } (0,\infty)\times \Omega,
		\end{equation*}
		and in particular to show that $[\varrho, \textbf{m}]$ satisfy condition (iv) of Definition \ref{dissipative solution}, we need the following result.
		
		\begin{lemma} \label{auxiliary lemma}
			Let $\Omega \subset \mathbb{R}^d$ be a bounded domain. Suppose
			\begin{equation*} 
			\{\varrho_n\}_{n\in \mathbb{N}} \mbox{ to be uniformly bounded in } L^{\infty}(0,\infty; L_{\Phi}(\Omega)),
			\end{equation*}
			where $L_{\Phi}(\Omega)$ is the Orlicz space associated to the Young function $\Phi$ satisfying the $\Delta_2$--condition, with $\varrho_n \geq 0$ for all $n\in \mathbb{N}$. Suppose also that
			\begin{equation} \label{expression of derivative of rho_n}
			\partial_t \varrho_n =-\divv_x \textbf{g}_n
			\end{equation}
			where 
			\begin{equation} \label{uniform integrability of the derivatives of rho_n}
			\{ \textbf{g}_n \}_{n\in \mathbb{N}} \mbox{ is uniformly bounded in } L^{\infty}(0,\infty; L^p(\Omega;\mathbb{R}^d))
			\end{equation}
			for some $p\geq 1$. Moreover, let
			\begin{equation} \label{uniform integraility in W^1,q}
			\{ \textbf{u}_n \}_{n\in \mathbb{N}} \mbox{ be uniformly bounded in } L^q_{\rm loc}(0,\infty; W^{1,q}(\Omega;\mathbb{R}^d)), \ q>1.
			\end{equation}
			Finally, let the sequence $\{\varrho_n \textbf{u}_n\}_{n\in \mathbb{N}}$ be equi--integrable in $L^1_{\rm loc}(0,\infty; L^1( \Omega;\mathbb{R}^d))$.
			
			Then, if
			\begin{align}
			\varrho_n \rightarrow \varrho \quad &\mbox{in } C_{\rm weak, loc}([0,\infty); L^1(\Omega)), \label{convergence of rho_n}\\
			\textbf{u}_n \rightharpoonup \textbf{u} \quad &\mbox{in } L^q_{\rm loc}(0,
			\infty; W^{1,q}(\Omega;\mathbb{R}^d)),
			\end{align}
			and
			\begin{equation*}
			\varrho_n \textbf{u}_n \rightharpoonup \textbf{m} \quad \mbox{in } L^1_{\rm loc}(0,\infty; L^1( \Omega; \mathbb{R}^d)),
			\end{equation*}
			we have
			\begin{equation*}
			\textbf{m}=\varrho \textbf{u} \quad \mbox{a.e. in }(0,\infty)\times \Omega.
			\end{equation*}
		\end{lemma}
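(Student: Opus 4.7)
\emph{Step 1. Strong compactness of $\varrho_n$ in a negative Sobolev space.} The starting point is the identity \eqref{expression of derivative of rho_n} together with the bound \eqref{uniform integrability of the derivatives of rho_n}, which imply that $\{\partial_t \varrho_n\}$ is uniformly bounded in $L^\infty(0,\infty; W^{-1,p}(\Omega))$, so that $\{\varrho_n\}$ is equi-Lipschitz in time as a sequence of $W^{-1,p}(\Omega)$-valued maps. On the other hand, the uniform bound in $L^\infty(0,\infty; L_\Phi(\Omega))$ forces a uniform $L^\infty(0,\infty; L^1(\Omega))$ bound, and the dual of the compact Sobolev embedding $W_0^{1,r'}(\Omega) \hookrightarrow\hookrightarrow C(\overline{\Omega})$ (valid for $r' > d$) gives $L^1(\Omega) \hookrightarrow\hookrightarrow W^{-1,r}(\Omega)$ for every $r < d/(d-1)$. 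Arzelà--Ascoli then yields, up to a subsequence,
\[
\varrho_n \longrightarrow \varrho \quad \mbox{strongly in } C_{\rm loc}([0,\infty); W^{-1,r}(\Omega)).
\]

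\emph{Step 2. Passage to the limit in the product.} Fix $\bm{\varphi} \in C_c^\infty((0,\infty)\times\Omega; \mathbb{R}^d)$ and decompose
\[
\int_0^\infty\!\!\int_\Omega \varrho_n \textbf{u}_n \cdot \bm{\varphi} \, {\rm d}x{\rm d}t - \int_0^\infty\!\!\int_\Omega \varrho \textbf{u} \cdot \bm{\varphi} \, {\rm d}x{\rm d}t = A_n + B_n,
\]
with $A_n = \int (\varrho_n-\varrho)(\textbf{u}_n \cdot \bm{\varphi})\, {\rm d}x{\rm d}t$ and $B_n = \int \varrho\, (\textbf{u}_n - \textbf{u}) \cdot \bm{\varphi}\, {\rm d}x{\rm d}t$. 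I would interpret $A_n$ as a time integral of the duality pairing $\langle \varrho_n(t)-\varrho(t),\, (\textbf{u}_n\cdot \bm{\varphi})(t)\rangle_{W^{-1,r},\, W_0^{1,r'}}$; by Step~1 the negative-Sobolev norm vanishes uniformly on the compact time-support of $\bm{\varphi}$, while the Leibniz rule combined with \eqref{uniform integraility in W^1,q} shows that $\textbf{u}_n \cdot \bm{\varphi}$ is uniformly bounded in $L^q(0,T; W_0^{1,q}(\Omega))$ with compact spatial support. The term $B_n$ then vanishes by the weak convergence of $\textbf{u}_n$, using the Sobolev embedding $W^{1,q}(\Omega) \hookrightarrow L^{q^{*}-\varepsilon}(\Omega)$ and the fact that $\varrho \, \bm{\varphi}$ sits in the corresponding dual space thanks to the refined $L_\Phi$-integrability of $\varrho$. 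Since $\varrho_n \textbf{u}_n \rightharpoonup \textbf{m}$ in $L^1_{\rm loc}$ by hypothesis, uniqueness of the weak limit forces $\textbf{m}=\varrho\textbf{u}$ a.e.

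\emph{Main obstacle.} The delicate point is the pairing controlling $A_n$: compactness of the densities is available only in $W^{-1,r}$ with $r < d/(d-1)$, hence $r' > d$, whereas the weak convergence of the velocities lives in $W^{1,q}$ with $q > 1$ possibly much smaller than $d$. When $q \leq d$ the inclusion $W^{1,q} \hookrightarrow W^{1,r'}$ fails outright, and one has to argue differently. The natural remedy is to regularise the density by spatial mollification $\varrho_n^\delta = \varrho_n \ast_x \eta_\delta$: for each fixed $\delta$ the family $\{\varrho_n^\delta\}$ is uniformly bounded in $C^{0,1}_{\rm loc}([0,\infty)\times\Omega)$ (using \eqref{expression of derivative of rho_n}--\eqref{uniform integrability of the derivatives of rho_n}), so that $\varrho_n^\delta \to \varrho^\delta$ locally uniformly, passing to the limit in $\varrho_n^\delta \textbf{u}_n$ is immediate, and one then sends $\delta \to 0$ exploiting the equi-integrability of $\{\varrho_n \textbf{u}_n\}$ together with a DiPerna--Lions-type commutator estimate based on the $W^{1,q}$-regularity of $\textbf{u}_n$. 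Closing this last step cleanly is where I expect the essential work to lie.
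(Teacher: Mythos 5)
Your Step 1 is fine and is essentially the conclusion step of the paper's own proof. The genuine gap, which you already half-identify, is in Step 2: the pairing $\langle \varrho_n - \varrho,\, \textbf{u}_n\cdot\bm{\varphi}\rangle_{W^{-1,r},W_0^{1,r'}}$ needs $r'>d$, whereas $\textbf{u}_n$ is only in $W^{1,q}$ with $q>1$ and possibly $q\le d$, so the pairing is simply not defined. The remedy you suggest — mollifying the \emph{density}, $\varrho_n^\delta = \varrho_n * \eta_\delta$, and invoking DiPerna--Lions commutators — does not close. To make it work you would need $\|\varrho_n - \varrho_n^\delta\|_{L^1(\Omega)}\to 0$ as $\delta\to 0$ \emph{uniformly in $n$}, but the only spatial information on $\varrho_n$ is a uniform $L_\Phi$-bound, which gives weak $L^1$-compactness (de la Vall\'ee--Poussin) and not strong compactness or a uniform modulus of continuity of translations; the Fr\'echet--Kolmogorov criterion requires both. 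DiPerna--Lions commutator estimates control expressions like $(\textbf{u}\cdot\nabla\varrho)_\delta - \textbf{u}\cdot\nabla\varrho_\delta$ and are not the relevant tool here.

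The paper's proof mollifies the \emph{velocity}, not the density, and does so in two preparatory reductions. First, it truncates $\textbf{u}_n$ to $T_k(\textbf{u}_n)$ and uses the assumed equi-integrability of $\{\varrho_n\textbf{u}_n\}$ to make the error small uniformly in $n$; this reduces to $\textbf{u}_n$ uniformly bounded in $L^\infty_{\rm loc}(L^\infty)$ in addition to $L^q_{\rm loc}(W^{1,q})$. Second, it mollifies, $\textbf{u}_n\mapsto \theta_\delta*\textbf{u}_n$, and controls $\int\varrho_n|\textbf{u}_n - \theta_\delta*\textbf{u}_n|\,{\rm d}x$ via the Orlicz duality $\langle L_\Phi, L_\Psi\rangle$: the decisive ingredient is the compact embedding $X=W^{1,q}_0\cap L^\infty(\Omega)\hookrightarrow\hookrightarrow L_\Psi(\Omega)$ (Proposition \ref{compact embedding in Orlicz space}), which yields $\sup_{\|\textbf{u}\|_X\le M}\|\textbf{u}-\theta_\delta*\textbf{u}\|_{L_\Psi}\to 0$ as $\delta\to 0$. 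Only after these two reductions is $\textbf{u}_n$ effectively bounded in $L^q(W^{m,r})$ with $m,r$ arbitrarily large, and only then does your Step 1 pairing (with the density compact in $W^{-1,s'}$, $s>d$) actually close. This truncation-plus-velocity-mollification scheme with the Orlicz compact embedding is the essential work your outline leaves open.
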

		
		\begin{proof}
			\begin{enumerate}
				\item \textbf{\textit{Truncation.}}
				Following the same strategy developed in \cite{AbbFei}, Lemma 8.1, it is enough to suppose that 
				\begin{equation} \label{uniform integrability in Linfinity}
				\{ \textbf{u}_n \}_{n\in \mathbb{N}} \mbox{ is uniformly bounded in } L^{\infty}_{\rm loc}(0,\infty; L^{\infty}( \Omega; \mathbb{R}^d)).
				\end{equation}
				Considering the truncation $T_k(\textbf{u}_n)$, notice that in this case we have
				\begin{equation*}
				\int_{a}^{b} \int_{\Omega} \varrho_n |\textbf{u}_n - T_k(\textbf{u}_n)| \ dxdt \leq 2 \int \int_{\{|\textbf{u}_n| \geq k\}} \varrho_n |\textbf{u}_n| \ dxdt \rightarrow 0 \quad \mbox{as } k\rightarrow \infty, \mbox{ uniformly in }n,
				\end{equation*}
				for every $[a,b]\subset (0,\infty)$, in view of the equi--integrability of $\{ \varrho_n \textbf{u}_n \}_{n\in \mathbb{N}}$.
				\item \textbf{\textit{Regularization.}} We claim that it is sufficient to suppose
				\begin{equation} \label{relaxation of requirement on u}
				\{ \textbf{u}_n \}_{n\in \mathbb{N}} \mbox{ to be uniformly bounded in } L^q_{\rm loc}(0,\infty; W^{m,r}(\Omega;\mathbb{R}^d))
				\end{equation}
				with $q>1$ and $m,r$ arbitrarily large. Seeing all the quantities involved as embedded in $\mathbb{R}^d$ with compact support, we consider regularization in the spatial variable by convolution with a family of regularizing kernels $\{ \theta_{\delta} \}_{\delta>0}$,
				\begin{equation*}
				\theta_{\delta}(x)= \frac{1}{\delta^d} \ \theta \left( \frac{x}{\delta} \right),
				\end{equation*}
				where $\theta$ is a bell--shaped function such that
				\begin{equation*}
				\theta \in C^{\infty}_c (\mathbb{R}^d), \quad \theta \geq 0, \quad \theta(x)=\theta(|x|), \quad \int_{\mathbb{R}^d} \theta(x) dx=1.
				\end{equation*}
				As in the previous step, writing 
				\begin{equation*}
				\varrho_n \textbf{u}_n = \varrho_n \ \theta_{\delta} * \textbf{u}_n + \varrho_n (\textbf{u}_n - \theta_{\delta} * \textbf{u}_n),
				\end{equation*}
				our goal is tho show that for every $[a,b]\subset (0,\infty)$
				\begin{equation*}
				\int_{a}^{b} \int_{\Omega} \varrho_n |\textbf{u}_n - \theta_{\delta} * \textbf{u}_n | dxdt \rightarrow 0 \quad \mbox{as } \delta\rightarrow 0, \mbox{ uniformly in }n.
				\end{equation*}
				
				To this end, we introduce the Banach space 
				\begin{equation*}
				X= W_0^{1,q} \cap L^{\infty} (\Omega;\mathbb{R}^d)
				\end{equation*}
				and observe that, in view of \eqref{uniform integraility in W^1,q} and \eqref{uniform integrability in Linfinity},
				\begin{equation} \label{norms in X belong to Lq}
				\{ \|\textbf{u}_n \|_X \}_{n\in \mathbb{N}} \mbox{ is uniformly bounded in } L^q_{\rm loc}(0,\infty).
				\end{equation}
				
				Consequently, we may write 
				\begin{equation*}
				\int_{a}^{b} \int_{\Omega} \varrho_n |\textbf{u}_n - \theta_{\delta} * \textbf{u}_n | dxdt = I_1^M + I_2^M
				\end{equation*}
				with
				\begin{align*}
				I_1^M &= \int_{\{ \|\textbf{u}_n(t, \cdot)\|_X \leq M \}} \int_{\Omega} \varrho_n |\textbf{u}_n - \theta_{\delta} * \textbf{u}_n | dxdt, \\
				I_2^M &= \int_{\{ \|\textbf{u}_n(t, \cdot)\|_X > M \}} \int_{\Omega} \varrho_n |\textbf{u}_n - \theta_{\delta} * \textbf{u}_n | dxdt,
				\end{align*}
				where, in view of \eqref{norms in X belong to Lq} - recall that the functions $\varrho_n$ are weakly continuous in time
				\begin{equation*}
				I_2^M \leq c \sup_{t\in [a,b]}\|\varrho_n(t,\cdot)\|_{L^1(\Omega)} \|\textbf{u}_n\|_{L^{\infty}((a,b)\times \Omega; \mathbb{R}^d)} \ |\{ \|\textbf{u}_n(t, \cdot)\|_X > M \}| \rightarrow 0 \quad \mbox{as } M\rightarrow \infty,
				\end{equation*}
				uniformly in $n$ and independently of $\delta$.
				
				It remains to show smallness of the first integral for fixed $M$. To this end, denoting with $\Psi$ the complementary Young function of $\Phi$, we consider the Orlicz space $L_{\Psi}(\Omega)$ that can be identified with the dual of $L_{\Phi}(\Omega)$ as $\Phi$ satisfies the $\Delta_2$--condition. By Proposition \ref{compact embedding in Orlicz space} below, we recover the compact embedding 
				\begin{equation*}
				X \hookrightarrow\hookrightarrow L_{\Psi}(\Omega;\mathbb{R}^d)
				\end{equation*}
				which, combined with boundedness of convolution on $L_{\Psi}(\Omega)$ (see \cite{HarHas}, Lemma 4.4.3), gives 
				\begin{equation*}
				I_1^M \leq \sup_{t\in [a,b]} \| \varrho_n(t,\cdot)\|_{ L_{\Phi}(\Omega)} \sup_{\|\textbf{u}\|_X\leq M} \|\textbf{u} -\theta_{\delta} * \textbf{u}\|_{L_{\Psi}(\Omega;\mathbb{R}^d)}\rightarrow 0 \quad \mbox{as } \delta\rightarrow 0.
				\end{equation*}
				\item \textbf{\textit{Conclusion.}} Using the Sobolev embedding
				\begin{equation*}
					L^1(\Omega) \hookrightarrow\hookrightarrow W^{-1, s'} \quad \mbox{for any } s>d,
				\end{equation*}
				from \eqref{convergence of rho_n} we get that
				\begin{equation*}
				\varrho_n \rightarrow \varrho \quad \mbox{in } C_{\rm loc}([0,\infty); W^{-1, s'}(\Omega)) \quad \mbox{for any } s>d,
				\end{equation*}
				and thus, to conclude the proof of the Lemma it is sufficient to choose $m=1$ and $r=s$ in \eqref{relaxation of requirement on u}. 
			\end{enumerate}
		\end{proof}
		
		\begin{proposition} \label{compact embedding in Orlicz space}
			Let $\Omega\subset \mathbb{R}^d$ be a bounded domain. Then, for a fixed $q\geq1$
			\begin{equation*}
			X=W^{1,q}_0\cap L^{\infty}(\Omega) \hookrightarrow\hookrightarrow L_{\Phi}(\Omega),
			\end{equation*}
			where $L_{\Phi}$ is the Orlicz space associated to the Young function $\Phi$.
		\end{proposition}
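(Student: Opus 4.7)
\medskip

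\noindent\textbf{Proof proposal.} The plan is to combine the classical Rellich--Kondrachov embedding $W^{1,q}_0(\Omega)\hookrightarrow\hookrightarrow L^1(\Omega)$ with the uniform $L^\infty$ bound coming from the second factor of $X$, and then convert pointwise convergence into Luxemburg--norm convergence via dominated convergence applied to the Orlicz modular. The $L^\infty$--bound is what gives $X$ strictly more structure than $W^{1,q}_0$ alone and is the reason we can hit any Orlicz space with a bounded domain.

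\medskip

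\noindent\emph{Step 1 (extraction of a candidate limit).} Let $\{u_n\}_{n\in\mathbb N}\subset X$ be a bounded sequence, so that there exists $C>0$ with
\begin{equation*}
	\|u_n\|_{W^{1,q}_0(\Omega)} + \|u_n\|_{L^\infty(\Omega)} \leq C \quad \mbox{for all } n\in\mathbb N.
\end{equation*}
Since $\Omega$ is bounded, Rellich--Kondrachov furnishes a (not relabelled) subsequence and some $u\in L^1(\Omega)$ such that $u_n\to u$ strongly in $L^1(\Omega)$; extracting once more we may assume $u_n\to u$ a.e. in $\Omega$. Fatou's lemma then yields $\|u\|_{L^\infty(\Omega)}\leq C$, hence $|u_n-u|\leq 2C$ a.e. in $\Omega$ for every $n$.

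\medskip

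\noindent\emph{Step 2 (modular convergence via dominated convergence).} Fix $k>0$. Because $\Phi$ is non--decreasing and continuous with $\Phi(0)=0$,
\begin{equation*}
	\Phi\Bigl( \frac{|u_n(x)-u(x)|}{k} \Bigr) \leq \Phi\Bigl( \frac{2C}{k} \Bigr) \quad \mbox{for a.e. } x\in\Omega,
\end{equation*}
and the right--hand side is a finite constant, hence an integrable majorant over the bounded set $\Omega$. Since $u_n\to u$ a.e. in $\Omega$ and $\Phi$ is continuous, the Lebesgue dominated convergence theorem gives
\begin{equation*}
	\int_\Omega \Phi\Bigl( \frac{|u_n-u|}{k} \Bigr) \mathop{}\!{\rm d}x \xrightarrow[n\to\infty]{} 0.
\end{equation*}

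\medskip

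\noindent\emph{Step 3 (from modular convergence to Luxemburg--norm convergence).} Given $\varepsilon>0$, apply Step 2 with $k=\varepsilon$: there exists $n_0$ such that
\begin{equation*}
	\int_\Omega \Phi\Bigl( \frac{|u_n-u|}{\varepsilon} \Bigr) \mathop{}\!{\rm d}x \leq 1 \quad \mbox{for all } n\geq n_0,
\end{equation*}
which by the very definition of the Luxemburg norm on $L_\Phi(\Omega)$ means $\|u_n-u\|_{L_\Phi(\Omega)}\leq \varepsilon$ for all $n\geq n_0$. Hence $u_n\to u$ in $L_\Phi(\Omega)$, proving the compact embedding $X\hookrightarrow\hookrightarrow L_\Phi(\Omega)$.

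\medskip

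\noindent\emph{Main obstacle.} There is no genuinely hard step: the delicate point is merely to notice that the $L^\infty$ component of $X$ provides the constant majorant $\Phi(2C/k)$, which is exactly what is needed to run dominated convergence on the modular regardless of the growth of $\Phi$ (in particular, no $\Delta_2$ hypothesis on $\Phi$ is required, only its use at the very end in the identification with a dual used in the proof of Lemma~\ref{auxiliary lemma}). Once this observation is made, the passage from modular to Luxemburg--norm convergence is a one--line consequence of the definition.
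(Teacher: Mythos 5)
Your proof is correct, but it takes a genuinely different and more elementary route than the paper's. The paper works with a bounded set $K\subset X$, introduces an auxiliary Young function $\Phi_1$ growing strictly faster than $\Phi$ (i.e.\ $\Phi\prec\prec\Phi_1$), uses the $L^\infty$ bound to show $K$ is bounded in $L_{\Phi_1}(\Omega)$, invokes \cite{KufJohFuc} (Theorems 3.17.7 and 3.17.8) to deduce $L_{\Phi_1}(\Omega)\hookrightarrow E_\Phi(\Omega)$ together with uniform continuity of the $L_\Phi$--norms of elements of $K$, combines this with relative compactness of $K$ in measure (coming from Rellich--Kondrachov in $W^{1,q}_0$), and finishes with the Vitali-type compactness criterion in Orlicz spaces (\cite{KufJohFuc}, Theorem 3.14.11). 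You instead extract an a.e.\ convergent subsequence directly, note the uniform majorant $|u_n-u|\leq 2C$ supplied by the $L^\infty$ component of $X$, and run dominated convergence on the modular $\int_\Omega\Phi(|u_n-u|/k)\,{\rm d}x$ for each fixed $k>0$; taking $k=\varepsilon$ then gives Luxemburg--norm convergence by definition. Your argument is shorter, self-contained, and avoids both the auxiliary Young function $\Phi_1$ and the abstract Orlicz compactness machinery, while the paper's version is more structural and fits naturally into the general theory it cites. Both arguments tacitly require $\Phi$ to be finite-valued and continuous (so that $L^\infty(\Omega)\subset L_\Phi(\Omega)$ on the bounded domain, and so that your majorant $\Phi(2C/k)$ is finite), which is the standard reading of the Proposition and is certainly satisfied by $\Phi(z)=z\log^+z$ in the application to Lemma~\ref{auxiliary lemma}.
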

		\begin{proof}
			Let $K$ be a bounded set of $X$ and let $\Phi_1$ be a Young function such that $\Phi \prec \prec \Phi_1$, i.e. 
			\begin{equation*}
			\lim_{t\rightarrow \infty} \frac{\Phi(t)}{\Phi_1(\lambda t)}=0,
			\end{equation*}
			for all $\lambda >0$. Then, in particular, $K$ is bounded in the Orlicz space $L_{\Phi_1}(\Omega)$; indeed, denoting with $\Psi$ the complementary Young function of $\Phi$, we have that for every $u\in K$ and every $v$ belonging to the Orlicz class $\widetilde{L}_{\Psi}(\Omega)$
			\begin{equation*}
			\int_{\Omega}|u(x)v(x)| dx \leq \|u\|_{L^{\infty}(\Omega)} \|v\|_{L^1(\Omega)} \leq \|u\|_{L^{\infty}(\Omega)} \ \sigma(v;\Psi),
			\end{equation*}
			and thus
			\begin{equation*}
				\|u\|_{L_{\Phi_1}(\Omega)} =\sup_{\substack{v\in \widetilde{L}_{\Psi}(\Omega) \\ \sigma(v;\Psi)\leq1}} \int_{\Omega}|u(x)v(x)| dx \leq \|u\|_{L^{\infty}(\Omega)} \leq \|u\|_X \leq c,
			\end{equation*}
			where $\|u\|_X =\max\{ \|u\|_{W^{1,q}(\Omega)}, \|u\|_{L^{\infty}(\Omega)} \}$ and the constant $c$ is independent of the choice $u\in K$. Applying \cite{KufJohFuc}, Theorems 3.17.7 and 3.17.8 we get that
			\begin{equation*}
			L_{\Phi_1}(\Omega)\hookrightarrow E_{\Phi}(\Omega),
			\end{equation*}
			where $E_{\Phi}(\Omega)$ is the closure of the set of all bounded measurable functions defined on $\Omega$ with respect to the Orlicz norm $\| \cdot \|_{L_{\Phi}}$, and that the functions in $K$ have uniformly continuous $L_{\Phi}$--norms, i.e., for every $\varepsilon>0$ there exists a $\delta=\delta(\varepsilon)>0$ such that
			\begin{equation*}
			\|u \mathbbm{1}_M\|_{L_{\Phi}(\Omega)} <\varepsilon,
			\end{equation*} 
			provided $M\in \Omega$ is measurable, $|M|<\delta$ and $u\in K$.
			
			Furthermore, since 
			\begin{equation*}
			W^{1,q}(\Omega) \hookrightarrow\hookrightarrow L^1(\Omega),
			\end{equation*}
			the set $K$ is relatively compact in $L^1(\Omega)$ and consequently it is relatively compact with respect to the convergence in measure.
			
			Finally, it is sufficient to apply \cite{KufJohFuc}, Theorem 3.14.11, which we report for reader's convenience.
			\begin{theorem}
				Let $K$ be a subset of $E_{\Phi}(\Omega)$ which is relatively compact in the sense of convergence in measure and such that the functions in $K$ have uniformly continuous $L_{\Phi}$--norms. Then $K$ is relatively compact in $L_{\Phi}$.
			\end{theorem}
		\end{proof}
		
		\subsection{Limit of the energies} \label{Energy convergence}
		From \eqref{energy} we can notice that the energies $E_n(\tau)$ are non--increasing and for $\gamma>1$ they are also non--negative, while for $\gamma=1$ we have
		\begin{align*}
			E_n(\tau) &\geq \int_{\Omega} \left[ \frac{1}{2}  \frac{|\textbf{m}_n|^2}{\varrho_n} + \mathbbm{1}_{\varrho_n \geq 1} \ \varrho\log \varrho\right] (\tau, \cdot) dx +\frac{1}{\lambda_n}\int_{\overline{\Omega}} d \trace[\mathfrak{R}_n (\tau)] + \int_{\{ 0\leq \varrho_n < 1 \}} \varrho_n \log \varrho_n dx \\
			&\geq \mbox{``non-negative term"} -\frac{|\Omega|}{e}.
		\end{align*}
		for a.e. $\tau >0$. Hence, for every $[a,b]\subset (0,\infty)$ and every $n\in \mathbb{N}$
		\begin{equation*}
		\|E_n\|_{L^1[a,b]}  \leq  \int_{0}^{b} |E_n(t)| dt \leq b \ \sup_{t\in[0,b]} |E_n(t)| \leq b \ E_{0,n} \leq c(\overline{E}),
		\end{equation*}
		\begin{equation*}
			V_a^b(E_n) =\int_{a}^{b} \left|\frac{d}{dt} E_n \right|= -\int_{a}^{b} \frac{d}{dt} E_n dt = E_n(a) - E_n(b) \leq \begin{cases}
				E_{0,n} + \frac{|\Omega|}{e} &\mbox{if } \gamma=1 \\
				E_{0,n}  &\mbox{if } \gamma>1
			\end{cases} \leq c(\overline{E}),
		\end{equation*}
		so that $\{E_n\}_{n\in \mathbb{N}}$ is locally of bounded variation. We can then use Helly's selection theorem (compactness theorem for $BV_{\rm loc}$): a sequence of functions that is locally of total bounded variation and uniformly bounded at a point has a convergent subsequence, pointwise and in $L^1_{\rm loc}$. Passing to a suitable subsequence as the case may be, we obtain
		\begin{equation} \label{energies convergence}
			E_n(t) \rightarrow E(t) \quad \mbox{for every } t\in [0,\infty) \mbox{ and in } L^1_{\rm loc}(0, \infty),
		\end{equation}
		which in particular implies
		\begin{equation} \label{convergence energies}
			E_n \rightarrow E \quad \mbox{in } \mathfrak{D}([0,\infty); \mathbb{R}),
		\end{equation}
		since $E_n: [0,\infty) \rightarrow \mathbb{R}$ is a monotone function for all $n\in \mathbb{N}$ and thus, by Proposition \ref{characterization convergence non-compact hilbert}, showing \eqref{convergence energies} is equivalent to show almost everywhere convergence.
		
		On the other side, from \eqref{estimate kinetic energy}, \eqref{estimate pressure potential} and \eqref{estimate defect energy} we get
		\begin{align*}
		\frac{|\textbf{m}_n|^2}{\varrho_n} \overset{*}{\rightharpoonup} \overline{\frac{|\textbf{m}|^2}{\varrho}} \quad &\mbox{in } L^{\infty}_{\rm weak}(0,\infty; \mathcal{M}(\overline{\Omega})) \\
		P(\varrho_n) \overset{*}{\rightharpoonup} \overline{P(\varrho)} \quad &\mbox{in } L^{\infty}_{\rm weak}(0,\infty; \mathcal{M}(\overline{\Omega})) \\
		\frac{1}{\lambda_n}\trace \left[\mathfrak{R}_n\right] \overset{*}{\rightharpoonup} \widetilde{\mathfrak{E}} \quad &\mbox{in } L^{\infty}_{\rm weak}(0,\infty; \mathcal{M}^+(\overline{\Omega})).
		\end{align*}
		We can then write 
		\begin{equation} \label{problem1}
		E(\tau) = \int_{\Omega} \left[ \frac{1}{2} \frac{|\textbf{m}|^2}{\varrho} + P(\varrho) \right](\tau,\cdot) dx + \int_{\overline{\Omega}} d \mathfrak{E}(\tau)
		\end{equation}
		for a.e. $\tau>0$, with
		\begin{equation*}
		d\mathfrak{E} = d \widetilde{\mathfrak{E}} + \frac{1}{2}\left( \overline{\frac{|\textbf{m}|^2}{\varrho}} - \frac{|\textbf{m}|^2}{\varrho} \right) dx + \left( \overline{P(\varrho)}- P(\varrho) \right) dx
		\end{equation*}
		where, once again, from the convexity of the function $P$ and of the superposition $[\varrho, \textbf{m}]\mapsto \frac{|\textbf{m}|^2}{\varrho}$, we get
		\begin{equation*}
			\mathfrak{E} \in L^{\infty}_{\rm weak}(0,\infty; \mathcal{M}^+(\overline{\Omega})).
		\end{equation*}
		As pointed out in Section \ref{remark reynold stress}, we can choose  constant $\lambda>0$ such that
		\begin{equation} \label{problem 2}
			\trace [\check{\mathfrak{R}}(\tau)] \leq \lambda \mathfrak{E}(\tau)
		\end{equation}
		for a.e. $\tau \in (0,T)$; however, with this choice we only get
		\begin{equation*}
			\int_{\Omega} \left[ \frac{1}{2} \frac{|\textbf{m}|^2}{\varrho} + P(\varrho) \right](\tau,\cdot) \ {\rm d}x + \frac{1}{\lambda}\int_{\overline{\Omega}} \rm d  \trace[\check{\mathfrak{R}}(\tau)] \leq E(\tau)
		\end{equation*}
		for a.e. $\tau \in (0,T)$. To obtain \eqref{energy}, it is sufficient to define a new defect
		\begin{equation*}
			\mathfrak{R}= \check{\mathfrak{R}} + \psi(t)\mathbb{I},
		\end{equation*}
		where the function $\psi\geq 0$ of time only can be chosen in such a way that
		\begin{equation*}
			\int_{\Omega} \left[ \frac{1}{2} \frac{|\textbf{m}|^2}{\varrho} + P(\varrho) \right](\tau,\cdot) \ {\rm d}x + \frac{1}{\lambda}\int_{\overline{\Omega}} \rm d  \trace[\mathfrak{R}(\tau)] = E(\tau)
		\end{equation*}
		for a.e. $\tau \in (0,T)$. Clearly,
		\begin{equation*}
			\int_{\overline{\Omega}} \nabla_x \bm{\varphi} : d\mathfrak{R} = \int_{\overline{\Omega}} \nabla_x \bm{\varphi} : d\check{\mathfrak{R}}
		\end{equation*}
		for any $\bm{\varphi} \in C_c^{\infty}([0,\infty) \times \overline{\Omega}; \mathbb{R}^d)$, $\bm{\varphi}|_{\partial \Omega}=0$, and therefore, the weak formulation of the balance of momentum \eqref{weak formulation balance of momentum} remains valid.
		
		Finally, notice that the couple $[\varrho, \textbf{m}]$ satisfies the energy inequality \eqref{energy inequality} due to lower semi--continuity of the functions $F$ and $F^*$: for a.e. $\tau>0$
		\begin{equation*}
			\int_{0}^{\tau} \int_{\Omega} [F(\mathbb{D}\textbf{u}) +F^*(\mathbb{S})] dxdt \leq \liminf_{n\rightarrow \infty}\int_{0}^{\tau} \int_{\Omega} [F(\mathbb{D}\textbf{u}_n) +F^*(\mathbb{S}_n)] dxdt;
		\end{equation*} 
		in particular, $[\varrho, \textbf{m}]$ satisfies condition (iv) of Definition \ref{dissipative solution}.

	\bigskip
	
	\centerline{\bf Acknowledgement}
	
	This work was supported by the Einstein Foundation, Berlin. The author wishes to thank Prof. Eduard Feireisl for the helpful advice and discussions.


\begin{thebibliography}{}
		
		\bibitem{AbbFei}
		A. Abbatiello and E. Feireisl, 
		\textit{On a class of generalized solutions to equations describing incompressible viscous fluids}, Annali di Matematica Pura e Applicata (1923--); 2019
		
		\bibitem{AbbFeiNov}
		A. Abbatiello, E. Feireisl and A. Novotn\'{y},
		\textit{Generalized solutions to mathematical models of compressible viscous fluids}, arXiv:1912.12896; 2019
		
		\bibitem{Bas}
		D. Basari\'{c},
		\textit{Semiflow selection for the compressible Navier--Stokes system}, Journal of Evolution Equations; 2020
		
		\bibitem{Bas1}
		D. Basari\'{c},
		\textit{Vanishing viscosity limit for the compressible Navier--Stokes system via measure--valued solutions}, arXiv:1903.05886
		
		\bibitem{BreCiaDie}
		D. Breit, A. Cianchi and L. Diening,
		\textit{Trace--free Korn inequality in Orlicz spaces}, SIAM Journal of Mathematical Analysis, \textbf{49}(4): 2496--2516; 2017
		
		\bibitem{BreFeiHofm1}
		D. Breit, E. Feireisl and M. Hofmanov\'{a},
		\textit{Dissipative Solutions and Semiflow Selection for the Complete Euler System}, Communications in Mathematical Physics; 2020
		
		\bibitem{BreFeiHofm}
		D. Breit, E. Feireisl  and M. Hofmanov\'{a},
		\textit{Markov selection for the stochastic compressible Navier--Stokes system}, arXiv:1809.07265; 2018
		
		\bibitem{BreFeiHof}
		D. Breit, E. Feireisl and M. Hofmanov\'{a},
		\textit{Solution semiflow to the isentropic Euler system}, Archive for Rational Mechanics and Analysis; 2019
		
		\bibitem{CarKap}
		J. E. Cardona and L. Kapitanski,
		\textit{Semiflow selection and Markov selection theorems}, arXiv:1707.04778; 2017
		
		\bibitem{FanFei}
		F. Fanelli and E. Feireisl,
		\textit{Statistical solutions to the barotropic Navier--Stokes system}, arXiv:arXiv:2003.04431; 2020
		
		\bibitem{Fei}
		E. Feireisl, \textit{Dynamics of Viscous Compressible Fluids}, Oxford University Press, Oxford; 2003
		
		\bibitem{FeiHof}
		E. Feireisl and M. Hofmanov\'{a},
		\textit{On the vanishing viscosity limit of the isentropic Navier--Stokes system}, arXiv:1905.02548; 2019
		
		\bibitem{FlaRom}
		F. Flandoli and M. Romito,
		\textit{Markov selections for the 3D stochastics Navier--Stokes equations}, Probab. Theory Related Fields, \textbf{140}(3-4): 407--458; 2008
		
		\bibitem{HarHas}
		P. Harjulehto and P. H\"{a}st\"{o},
		\textit{Orlicz spaces and generalized Orlicz spaces}, Lecture Notes in Mathematics 2236, Springer; 2019
		
		\bibitem{Jak}
		A. Jakubowski,
		\textit{On the Skorokhod topology}, Annales de l'I. H. P., section B, \textbf{22}(3): 263--285; 1986
		
		\bibitem{Kry}
		N. V. Krylov,
		\textit{The selection of a Markov process from a Markov system of processes, and the construction of quasidiffusion processes}, Izv. Akad. Nauk SSSR Ser. Mat., \textbf{37}: 691--708; 1973
		
		\bibitem{KufJohFuc}
		A. Kufner, O. John and S. Fu\v{c}\'{i}k,
		\textit{Function Spaces}, Springer, Prague; 1977
		
		\bibitem{MatNov}
		\v{S}. Matu\v{s}\accent23u-Ne\v{c}asov\'{a} and A. Novotn\'{y},
		\textit{Measure--valued solution for non--Newtonian compressible isothermal monopolar fluid}, Acta Applicandae Mathematica, \textbf{37}: 109--128; 1994
		
		\bibitem{StrVar}
		D. W. Stroock and S. R. S. Varadhan,
		\textit{Multidimensional diffusion processes}, Classics in Mathematics, Springer--Verlag, Berlin; 2006
		
		\bibitem{Whi}
		W. Whitt,
		\textit{Stochastic--Process Limits: An Introduction to Stochastic--Process Limits and Their Applications to Queues}, Springer Series In Operation Research; 2002
		
	\end{thebibliography}
\end{document}